\newcommand*{\N}{\mathds{N}}
\newcommand*{\R}{\mathds{R}}
\newcommand{\Ro}{\mathbf{R}}
\newcommand{\ww}{\mathbf{w}}
\newcommand{\edot}{\, \cdot \, }
\newcommand{\regg}[1]{\lVert#1\rVert_{1,\ww}}
\newcommand{\Ho}{\mathbf H}
\newcommand{\Lo}{\mathbf L}
\newcommand{\Ko}{\mathbf K}
\newcommand{\Uo}{\mathbf U}
\newcommand{\U}{\mathbb U}
\newcommand{\V}{\mathbb V}
\newcommand{\ph}{\varphi}
\newcommand{\Ph}{\boldsymbol \Phi}
\newcommand{\al}{\alpha}
\newcommand{\la}{\lambda}
\newcommand{\La}{\Lambda}
\newcommand{\reg}{\mathcal R}
\newcommand{\encoder}{\mathbf E}
\newcommand{\decoder}{\mathbf D}
\newcommand{\unet}{\mathcal N}
\newcommand{\signal}{u}
\newcommand{\data}{v}
\newcommand{\synthesis}{\mathcal{S}_{\alpha, \data_\delta}}
\newcommand{\xialpha}{\xi_{\alpha, \delta}}
\DeclarePairedDelimiter{\abs}{\lvert}{\rvert}
\DeclarePairedDelimiter{\norm}{\lVert}{\rVert}
\DeclarePairedDelimiter{\innerprod}{\langle}{\rangle}
\DeclarePairedDelimiter{\kl}{(}{)}
\DeclarePairedDelimiter{\set}{\{}{\}}
\DeclareMathOperator{\ran}{ran}
\DeclareMathOperator*{\argmin}{arg\,min}
\newtheorem{theorem}{Theorem}
\newtheorem{assumption}[theorem]{Assumption}
\newtheorem{proposition}[theorem]{Proposition}
\newtheorem{remark}[theorem]{Remark}
\numberwithin{table}{section}
\numberwithin{figure}{section}
\numberwithin{theorem}{section}
\colorlet{lred}{red!40}
\colorlet{lgreen}{green!40}
\colorlet{lblue}{blue!40}
\colorlet{lviolet}{violet!40}
\begin{document}

\author{Daniel Obmann}

\affil{Department of Mathematics\authorcr
University of Innsbruck\authorcr
Technikerstrasse 13, 6020 Innsbruck, Austria\authorcr
 {\tt daniel.obmann@uibk.ac.at}
 }

\author{Johannes Schwab}

\affil{Department of Mathematics, University of Innsbruck\authorcr
Technikerstrasse 13, 6020 Innsbruck, Austria\authorcr
 {\tt Johannes.Schwab@uibk.ac.at}
 }

\author{Markus Haltmeier}

\affil{Department of Mathematics, University of Innsbruck\authorcr
Technikerstrasse 13, 6020 Innsbruck, Austria\authorcr
 {\tt markus.haltmeier@uibk.ac.at}
 }

\markboth{Preprint submitted to IEEE Trans. Image Process.}%
{D. Obmann, J. Schwab and M. Haltmeier}

\title{Deep synthesis regularization of inverse problems}
\maketitle

\begin{abstract}
Recently, a large number of efficient deep learning methods for solving inverse problems have been developed and show outstanding numerical performance.  For these deep learning methods, however, a solid theoretical foundation in the form of reconstruction guarantees is missing. In contrast, for classical reconstruction methods, such as convex variational and frame-based regularization, theoretical convergence and convergence rate results are well established.  In this paper, we introduce deep synthesis regularization (DESYRE) using neural networks  as nonlinear synthesis operator bridging the gap between these two worlds. The proposed method allows to exploit the deep learning benefits of being well adjustable to available training data and on the other hand comes with a solid mathematical foundation.  We present a complete convergence analysis with convergence rates for the proposed deep synthesis regularization. We present a strategy for constructing a synthesis network as part of an analysis-synthesis sequence together with an  appropriate training strategy.  Numerical results show the plausibility of our approach.
\end{abstract}

\section{Introduction} \label{sec:intro}

Inverse problems naturally arise  in a wide range of important imaging applications, ranging from computed tomography, remote sensing to image restoration. Such application can be formulated as the task of reconstructing the unknown
 image  $\signal    \in \U$ from data
\begin{equation} \label{intro:ip}
\data_\delta  = \Ko \signal  + z_\delta  \,.
\end{equation}
Here $\Ko \colon \U \rightarrow \V$ is a linear operator between Hilbert spaces and
$z_\delta \in \V$ is the data distortion. Moreover, we assume $\norm{z_\delta}\leq \delta$, where the
index $\delta \geq 0$ denotes noise level. For $\delta =0$ we call
$\data  = \Ko \signal$ the noise free equation.

\subsection{Regularization}
\label{sec:reg}

Inverse problems are typically ill-posed. This means that even in the noise free case the solution of \eqref{intro:ip} is not unique or is unstable with respect to data perturbations. In order to overcome the ill-posedness, 
regularization methods have to be applied which incorporate suitable  prior information that acts as a selection
criterium and at the same  time stabilizes the reconstruction  \cite{scherzer2009variational,EngHanNeu96}.

One of the most established stable  reconstruction approaches
is convex variational regularization. In this case one considers minimizers of
the generalized Tikhonov functional 
\begin{equation} \label{eq:intro:var}
 \norm{\Ko \signal - \data_\delta}^2 + \alpha \reg_{\U}(\signal) \to  \min_{\signal}  \,,
\end{equation}
where $\reg_{\U} \colon \U \rightarrow [0, \infty]$ is a convex regularizer  
on the signal space $\U$ and $\alpha > 0$ is the regularization parameter.
Several choices for the regularizer have been proposed and  analysed.
For example, the choice $\reg_{\U}(\signal) = \norm{\signal}_\U^2$ leads to  quadratic Tikhonov regularization, 
choosing the regularizer as the total variation (TV) semi-norm $\reg_\U(\signal) = \norm{\nabla \signal}_1$ yields to TV-regularization, and  choosing  the $\ell^1$-norm $\reg_{\U}(\signal) = \sum_{ i \in \N} \abs{\innerprod{e_i, \signal}}$ with respect to a given orthonormal basis $(e_i)_{i \in \N}$ yields to sparse 
$\ell^1$-regularization. 

Convex variational regularization is build on a solid theoretical fundament. 
In particular, if $\reg_\U$ is convex, lower semi-continuous and coercive on $\ker(\Ko)^\bot$, then  
\eqref{eq:intro:var} is well-posed, stable  and convergent as $\alpha \to 0$. 
Moreover, for elements $\signal \in \U$ satisfying the so-called source condition 
$\ran(\Ko^* )\cap \partial \reg_{\U} (\signal) \neq \emptyset$, convergence rates in the form of quantitative estimates between $\signal$ and solutions  of \eqref{eq:intro:var}  have been derived~\cite{scherzer2009variational,burger2004convergence}.

\subsection{Frame-based methods} 
\label{sec:frames}

Variational regularization \eqref{eq:intro:var} is based on the  assumption that  a small value of the regularizer is a good prior for the underlying  signal-class. However, it is often challenging to hand-craft an appropriate regularization term for a given class of images. 
Frame-based methods address this issue by adjusting frames   to the signal  class and  using a small value of the regularizer of the frame coefficients 
as image prior.   
  
Let $(\ph_\la)_{\la  \in \La}$ be a frame of $\U$ and denote by
$\Ph \colon  \ell^2(\La) \to  \U$  the  synthesis operator 
that maps  $\xi \in \ell^2(\La)$
to the synthesized signal $\Ph(\xi) \coloneqq \sum_{\la \in \La} \xi_\la \ph_\la$.
Its adjoint  $\Ph^*$ is  the  analysis operator and maps $\signal \in \U$ to the so-called analysis coefficients 
$\Ph^*(\signal) = (\innerprod{\ph_\la, \signal})_{\la \in \La}$.
Two established frame based approaches are the following 
 frame synthesis and frame analysis regularization, respectively,  
\begin{align}
	\label{eq:syn}
	\xi_{\al,\delta}^{\rm syn} 
	&\in  	
	\argmin_{\xi}  \Bigl\{  
	\norm{\Ko \Ph (\xi)  - \data_\delta}^2 
	+ \alpha \reg (\xi) \Bigr\}
	 \\
	  \label{eq:ana}
	u_{\al,\delta}^{\rm ana} 
	&\in  
	\argmin_{\signal} 
	\Bigl\{ 
	\norm{\Ko \signal - \data_\delta}^2 + \alpha \reg(\Ph^*(\signal)) \Bigr\} 
	\,.
\end{align}
Here  $\reg \colon \ell^2(\La) \rightarrow [0, \infty]$ is a convex regularizer  on the  coefficient space  and $\alpha > 0$ the  regularization parameter.
Typical instances of frame analysis and frame synthesis regularization are when the regularizer $\reg = \regg{\edot}$ is taken as the (weighted) $\ell^1$-norm given by
\begin{equation}  \label{eq:snett1}
\regg{\xi} \coloneqq \sum_{ \la \in \La} w_\la \abs{\xi_\la} \,,
\end{equation}
where $w_\la > 0$.  In this case  \eqref{eq:ana} enforces sparsity of the  analysis coefficients  $\Ph^*(\signal)$, whereas \eqref{eq:syn} enforces sparsity of the    synthesis coefficients of the signal. In the case of bases, the  two approaches \eqref{eq:syn} and \eqref{eq:ana} are equivalent (if  one of them is applied with the dual basis). In the redundant case, however, they are fundamentally different 
\cite{elad2007analysis,frikel2019sparse}.

Many different frame-based approaches for solving inverse problems  have been analyzed \cite{dong2013x, haltmeier2013stable, chaux2007variational,elad2007analysis,frikel2019sparse}. However, these methods rely on linear synthesis operators which may not be appropriate for 
a given signal-class.  Recently, non-linear  deep learning and neural network based approaches showed  outstanding performance for various imaging  applications. Inspired by such methods, in this paper, we generalize the 
 frame based synthesis approach to allow neural networks as synthesis operators. Because these representations  are non-linear, the resulting approach requires new mathematical theory that we develop in this paper.

%
%
%

\subsection{Deep synthesis regularization}
\label{sec:reg}

In  this paper,  we propose deep synthesis regularization  (DESYRE)
where we consider minimizers of  
\begin{equation} \label{eq:snett}
\synthesis (\xi) \coloneqq   
\norm{(\Ko \circ \decoder_{\al}) (\xi) - \data_\delta}^2 
+ \alpha \regg{\xi} \,.
\end{equation}
Here, $\decoder_{\al} \colon \ell^2(\La) \rightarrow \U$ are  possibly non-linear synthesis  mappings  and $\regg{\edot} \colon \ell^2(\La) \rightarrow [0, \infty]$ is  the weighted $\ell^1$-norm defined by \eqref{eq:snett1}.
The main theoretical results  of this paper provide a complete convergence analysis for the DESYRE approach.
The  corresponding proofs  closely follow  \cite{grasmair2008sparse,scherzer2009variational}. 
We point out, however, that the convergence  results in  these references  cannot be directly 
applied  to our setting because we allow the coefficient operators  $\Ko \circ \decoder_{\al}$ in \eqref{eq:snett} to depend on  $\alpha$. Using  non-stationary synthesis mappings $\decoder_{\al}$ allows accounting for  discretization as  well as for approximate network training.  
In the recent years  various deep learning based reconstruction  methods have been derived which  outperform classical variational regularization \cite{han2018framing, lee2017deep, adler2017solving, jin2017deep}. However, rigorously  analyzing them as regularization methods  is challenging. DESYRE  follows the deep learning strategy and, as we demonstrate in this paper, allows the derivation of results similar to classical sparse regularization.  

In \cite{li2018nett}, a somehow dual approach  to  \eqref{eq:snett} has been studied, 
where minimizers of the NETT functional 
$ \norm{\Ko \signal - \data_\delta}^2 + \alpha \reg(\encoder(\signal))$
have been considered, where $\encoder \colon \U \rightarrow \ell^2(\La)$ is a non-linear analysis operator and $\reg{}$ a  regularizer. Due to the  non-linearity  of $\encoder$, the penalty  $\reg \circ \encoder$ is typically non-convex. One advantage  of the deep synthesis method over the analysis counterpart is  that the penalty term is  still convex,  which is beneficial for the theoretical analysis as well as the numerical minimization.
Another strength of DESYRE is that the network can be trained without explicit knowledge of the operator $\Ko$. Thus, the proposed approach has some kind of universality  like \cite{rick2017one, li2018nett, romano2017little, lunz2018adversarial, aggarwal2018modl} in the sense  that the  network is trained independent of the specific forward operator and can be used for different inverse problems without retraining.

\subsection{Outline}

The rest of the paper is organized as follows. 
Section~\ref{sec:conv} gives a theoretical analysis of the proposed method.
In Section~\ref{sec:trainingstrategy} we propose a learned synthesis mapping. We present numerical results and compare DESYRE to other reconstruction methods in Section~\ref{sec:num}. The paper ends with a summary and outlook presented in Section \ref{sec:outlook}. 
This paper is a significantly changed and extended version of the proceedings \cite{obmann2019sparse} presented at the SampTA 2019 in Bordeaux. The analysis of the proposed method and all the numerical results are completely new.

\section{Convergence analysis}
\label{sec:conv}

This section gives a complete convergence analysis of DESYRE together with  convergence rates. 
For the following analysis  consider  the coefficient equation
\begin{equation} \label{eq:sip}
	\data  =  (\Ko \circ \decoder) (\xi)   \,,
\end{equation}
where  $\Ko$ is the linear forward operator and  
$\decoder$ a possibly non-linear synthesis operator. 
In the case of noisy data, we approach \eqref{eq:sip} by deep synthesis regularization \eqref{eq:snett}, \eqref{eq:snett1} with variable synthesis operators $(\decoder_{\al})_{\al >0}$. Whenever it is clear from the context which norm is used, we omit the subscripts.

\subsection{Well-posedness}

Throughout this paper we assume that the following assumption holds. For the following let $\decoder, \decoder_{\al} \colon  \ell^2(\La) \to \U$ for $\al>0$ be given. 
    
\begin{assumption}[Deep synthesis regularization] \hfill \label{ass:well}
\begin{enumerate}[label=(R\arabic*), leftmargin=3em]
\item \label{ass:well1} $\U$ and $\V$  are Hilbert spaces;
\item \label{ass:well2} $\Ko \colon \U \rightarrow \V$ is linear and bounded;
\item \label{ass:well3} $\La$ is an at most countable set;
\item \label{ass:well5} $\forall \al > 0 \colon \decoder_{\al}$ is  weakly sequentially continuous;
\item \label{ass:well6} $(w_\la)_{\la \in \La} \in (0, \infty)^\La$  satisfies $\underline w \coloneqq \inf_{\la \in \La} w_\la > 0$.
\end{enumerate}
\end{assumption}

Assumption \ref{ass:well6}  implies that $\regg{\edot}$ is coercive. To see this, let  
$\xi \in \ell^2(\La)$ and assume  $\regg{\xi}  < \infty$. 
Then,   for all $\la \in \La$, we have $w_\la \abs{\xi_\la} / \regg{\xi} \leq 1$  and hence  
\begin{equation*}
\frac{\underline w^2}{\regg{\xi}^2}  \norm{\xi}_2^2 = 
\sum_{ \la \in \La}  \left(  \frac{\underline w \, \abs{ \xi_\la}}{\regg{\xi}}\right)^2 \leq \sum_{ \la \in \La} \left( \frac{w_\la \abs{ \xi_\la}}{\regg{\xi}} \right)^2 
\leq \sum_{ \la \in \La} \frac{ w_\la \abs{\xi_\la}}{\regg{\xi}} = 1.
\end{equation*}
This yields the estimate  
$\norm{\xi}_2 \leq \regg{\xi} / \underline w$ and proves the coercivity of $\regg{\edot}$. Since the data-discrepancy term is non-negative this also shows the coercivity of the synthesis functional $\synthesis$ for every $\alpha > 0$.

\begin{remark} The above proof relies on the fact that $x \geq x^2$ for  $x \in [0,1]$. Since the inequality $x^q \geq x^2$ on $[0,1]$  holds for any $q \leq 2$, the above proof can be done with a weighted $\ell^q$-norm.  Similarly, the  following well-posedness and the convergence results also hold  for   the (weighted) $\ell^q$-regularizer  
$\norm{\xi}_{q, \ww} = \sum_{ \la \in \La} w_\la \abs{\xi_\la}^q$ with  $q \in  [1,2]$. 
\end{remark}

As the sum of non-negative convex and weakly continuous functionals,
$\regg{\edot}$ is convex and weakly lower semi-continuous.  
Assumptions  \ref{ass:well2}, \ref{ass:well5} imply that $(\Ko \circ \decoder_{\al})$ is weakly sequentially continuous. Moreover, the norm $\norm{\,\cdot\,}_\V$ is weakly sequentially lower semi-continuous. This shows that $\synthesis$  is weakly 
lower semi-continuous as a sum of weakly lower semi-continuous functionals. 
Coercivity and weak  lower semi-continuity basically yield the following 
well-posedness results for deep synthesis regularization.

\begin{theorem}[Well-posedness] \label{thm:wellposed}
Let Assumption \ref{ass:well} be satisfied, 
$\data_\delta \in \V$ and $\alpha > 0$. 
Then the following hold:
\begin{enumerate}[label=(\alph*)] 
\item Existence: $\synthesis$ has at least one minimizer.
\item Stability: Let $(\data_k)_{k\in \N} \in \V^\N$ satisfy $\data_k \rightarrow \data_\delta$ and choose $\xi_k \in \argmin \mathcal{S}_{\alpha, \data_k}$. 
Then $(\xi_k)_{k\in \N}$ has a convergent subsequence and the 
limit of every convergent subsequence is a minimizer of $\synthesis$.
\end{enumerate}
\end{theorem}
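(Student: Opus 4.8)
The plan is to run the direct method of the calculus of variations, exploiting the coercivity estimate $\norm{\xi}_2 \le \regg{\xi}/\underline w$ and the weak lower semi-continuity of $\synthesis$ established above, together with the weak sequential continuity of $\Ko \circ \decoder_\al$.

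For part (a), I would take a minimizing sequence $(\xi_n)_{n \in \N}$ for $\synthesis$; this is legitimate because $\inf \synthesis \le \synthesis(0) = \norm{(\Ko \circ \decoder_\al)(0) - \data_\delta}^2 < \infty$. Since $\synthesis(\xi_n)$ is bounded, the penalty part satisfies $\alpha \regg{\xi_n} \le C$, and the coercivity estimate gives $\norm{\xi_n}_2 \le C/(\alpha \underline w)$; hence $(\xi_n)$ is bounded in $\ell^2(\La)$ and, by reflexivity of the Hilbert space $\ell^2(\La)$, has a weakly convergent subsequence $\xi_{n_k} \rightharpoonup \xi^\star$. Weak lower semi-continuity of $\synthesis$ then yields $\synthesis(\xi^\star) \le \liminf_k \synthesis(\xi_{n_k}) = \inf \synthesis$, so $\xi^\star$ is a minimizer.

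For part (b), I would first get boundedness of $(\xi_k)$: minimality gives $\mathcal{S}_{\alpha,\data_k}(\xi_k) \le \mathcal{S}_{\alpha,\data_k}(0) = \norm{(\Ko \circ \decoder_\al)(0) - \data_k}^2$, which is bounded because $\data_k \to \data_\delta$; hence $\alpha \regg{\xi_k}$ is bounded and, by coercivity, so is $\norm{\xi_k}_2$. Pass to a weakly convergent subsequence $\xi_{k_j} \rightharpoonup \xi^\star$. That $\xi^\star$ minimizes $\synthesis$ follows from two estimates. First, $(\Ko \circ \decoder_\al)(\xi_{k_j}) \rightharpoonup (\Ko \circ \decoder_\al)(\xi^\star)$ by weak sequential continuity and $\data_{k_j} \to \data_\delta$, so by weak lower semi-continuity of $\norm{\edot}_\V^2$ and of $\regg{\edot}$ one gets $\synthesis(\xi^\star) \le \liminf_j \mathcal{S}_{\alpha,\data_{k_j}}(\xi_{k_j})$. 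Second, for any $\xi$, minimality of $\xi_{k_j}$ gives $\mathcal{S}_{\alpha,\data_{k_j}}(\xi_{k_j}) \le \mathcal{S}_{\alpha,\data_{k_j}}(\xi) \to \synthesis(\xi)$, since the data term depends continuously on the right-hand side. Combining, $\synthesis(\xi^\star) \le \synthesis(\xi)$ for all $\xi$, and taking $\xi = \xi^\star$ additionally forces $\mathcal{S}_{\alpha,\data_{k_j}}(\xi_{k_j}) \to \synthesis(\xi^\star)$.

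The one genuinely delicate step, and the place where the concrete form of the penalty matters, is upgrading $\xi_{k_j} \rightharpoonup \xi^\star$ to strong $\ell^2$-convergence (needed for the first half of the claim). Since both summands of $\mathcal{S}_{\alpha,\data_{k_j}}$ are nonnegative, weakly lower semi-continuous, and their sum converges to $\synthesis(\xi^\star)$, a routine $\liminf$/$\limsup$ argument shows each summand converges separately; in particular $\regg{\xi_{k_j}} \to \regg{\xi^\star}$. Weak convergence in $\ell^2(\La)$ also gives componentwise convergence $\xi_{k_j,\la} \to \xi^\star_\la$. Then, applying Fatou's lemma to the nonnegative functions $\la \mapsto w_\la\bigl(\abs{\xi_{k_j,\la}} + \abs{\xi^\star_\la} - \abs{\xi_{k_j,\la} - \xi^\star_\la}\bigr)$ and using $\regg{\xi_{k_j}} \to \regg{\xi^\star}$ yields $\regg{\xi_{k_j} - \xi^\star} \to 0$ (the Radon--Riesz / Scheff\'e property of weighted $\ell^1$), whence $\norm{\xi_{k_j} - \xi^\star}_2 \le \regg{\xi_{k_j} - \xi^\star}/\underline w \to 0$. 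This proves $(\xi_k)$ has a strongly convergent subsequence; and since any strongly convergent subsequence of $(\xi_k)$ converges a fortiori weakly, the minimality argument above applies verbatim and shows its limit lies in $\argmin \synthesis$. I expect this Scheff\'e-type step to be the main obstacle, as it is exactly where one cannot get away with an abstract weakly lower semi-continuous penalty and must use the specific structure of $\regg{\edot}$.
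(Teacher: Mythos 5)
Your proof is correct. Note, though, that the paper does not carry out this argument itself: after observing that \ref{ass:well2} and \ref{ass:well5} make $\Ko \circ \decoder_{\al}$ weakly sequentially continuous, it simply invokes Propositions 5 and 6 of \cite{grasmair2008sparse}, applied with $\Ko \circ \decoder_{\al}$ in place of the forward operator. What you have written is essentially a self-contained expansion of exactly that argument: the direct method for existence, the two-sided $\liminf$/$\limsup$ sandwich for stability, and the splitting of the converging sum into separately converging summands. The step you correctly single out as the only delicate one --- upgrading $\xi_{k_j} \rightharpoonup \xi^\star$ together with $\regg{\xi_{k_j}} \to \regg{\xi^\star}$ to $\norm{\xi_{k_j} - \xi^\star}_2 \to 0$ via Fatou applied to $w_\la\bigl(\abs{\xi_{k_j,\la}} + \abs{\xi^\star_\la} - \abs{\xi_{k_j,\la} - \xi^\star_\la}\bigr)$ --- is precisely Lemma~2 of \cite{grasmair2008sparse}, which the paper cites explicitly in the proof of Theorem~\ref{thm:conv} for the same purpose; your Scheff\'e-type derivation of it is the standard one and is where the concrete weighted-$\ell^1$ structure (rather than an abstract weakly lower semi-continuous penalty) genuinely enters, exactly as you say. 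The only gain of your route is self-containedness; the only thing it obscures relative to the citation is that the argument works verbatim for the $\ell^q$ penalties with $q \in [1,2]$ mentioned in the paper's remark.
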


\begin{proof}
Assumptions   \ref{ass:well2}, \ref{ass:well5}
imply that $(\Ko \circ \decoder_{\al})$ is weakly sequentially continuous.
Therefore the results follow from \cite[Propositions 5 and 6]{grasmair2008sparse}. 
\end{proof}

\subsection{Convergence}


An  element in the set $\argmin \set{\regg{\xi} \mid  (\Ko \circ \decoder) (\xi) = \data}$ is called  $\regg{\edot}$-minimizing solution of  \eqref{eq:sip}.  
Note that $\regg{\edot}$-minimizing solutions exists whenever there is any solution $\xi$ with $\regg{\xi} < \infty$, see \cite[Theorem 3.25]{scherzer2009variational}.
For  the following convergence results we
make the following additional assumption
\begin{enumerate}[label=(R\arabic*), leftmargin=3em] \setcounter{enumi}{5}

\item \label{ass:well7}
$\forall \rho > 0 \colon \Delta_\rho(\al) \coloneqq \sup_{\norm{\xi} \leq \rho}\norm{\decoder_\al (\xi) - \decoder (\xi)} \to 0$.     
\end{enumerate}
This assumption guarantees that $\decoder_\al$ arbitrarily well  approximates $\decoder$ as $\al \to 0$.

\begin{theorem}[Convergence] \label{thm:conv}
Let Assumption \ref{ass:well}  and \ref{ass:well7} hold, let $\data \in \V$,  $\xi^\dag$ be an $\regg{\edot}$-minimizing 
solution of \eqref{eq:sip} and choose  $\alpha \colon  (0, \infty ) \to (0, \infty )$ 
such that
\begin{equation} \label{eq:alpha}
\lim_{\delta \to 0} \alpha(\delta) 
= 
\lim_{\delta \to 0}\frac{\Delta_\rho (\alpha(\delta))^2  }{\alpha(\delta)} 
= 
\lim_{\delta \to 0}\frac{\delta^2}{\alpha(\delta)} 
= 0
\,.
\end{equation}
Moreover,  let $(\delta_k)_{k \in \N} \in (0,\infty)^\N$, $(\data_k)_{k \in \N} \in \V^\N$ satisfy  $\delta_k \rightarrow 0$, $\norm{y - y_k} \leq \delta_k$ and choose $\xi_k \in \argmin \mathcal{S}_{\alpha(\delta_k), \data_k}$.
Then, the following hold:    
\begin{enumerate}[label=(\alph*)] 
\item\label{thm:conv1} $(\xi_k)_{k\in \N}$ has a convergent subsequence.
\item\label{thm:conv2} The limit of every  convergent subsequence $(\xi_{k(n)})_{n \in \N}$ 
is an  
$\regg{\edot}$-minimizing solution of \eqref{eq:sip}.
\item\label{thm:conv3}  If the $\regg{\edot}$-minimizing solution of the coefficient equation \eqref{eq:sip} is unique, then 
$\xi_k \to \xi^\dag$.
\end{enumerate}
\end{theorem}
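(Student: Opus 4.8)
The plan is to follow the classical argument for $\ell^1$-regularization from \cite{grasmair2008sparse,scherzer2009variational}, the only genuinely new point being that the coefficient operator $\Ko\circ\decoder_\al$ depends on $\al$ and only approximates $\Ko\circ\decoder$ in the sense of \ref{ass:well7}. Throughout I would write $\al_k \coloneqq \alpha(\delta_k)$ and $\rho^\dag \coloneqq \norm{\xi^\dag}$, and use repeatedly the coercivity estimate $\norm{\xi}_2 \le \regg{\xi}/\underline{w}$ established above.

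\emph{A priori estimates.} Since $\xi^\dag$ solves \eqref{eq:sip} and $\norm{\data - \data_k}\le\delta_k$, Assumption \ref{ass:well2} and the triangle inequality give $\norm{(\Ko\circ\decoder_{\al_k})(\xi^\dag) - \data_k} \le \norm{\Ko}\,\Delta_{\rho^\dag}(\al_k) + \delta_k =: \eta_k$, with $\eta_k\to0$ by \eqref{eq:alpha}. Comparing the minimizer $\xi_k$ of $\mathcal{S}_{\al_k,\data_k}$ with $\xi^\dag$ then yields
\begin{equation*}
\norm{(\Ko\circ\decoder_{\al_k})(\xi_k) - \data_k}^2 + \al_k\,\regg{\xi_k} \;\le\; \eta_k^2 + \al_k\,\regg{\xi^\dag}\,.
\end{equation*}
The second summand gives $\regg{\xi_k} \le \regg{\xi^\dag} + \eta_k^2/\al_k$, and \eqref{eq:alpha} forces $\eta_k^2/\al_k\to0$; hence $\limsup_k\regg{\xi_k}\le\regg{\xi^\dag}$, so $(\xi_k)$ is bounded in $\ell^2(\La)$. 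The first summand, together with $\eta_k\to0$, $\al_k\to0$ and $\norm{\data_k-\data}\le\delta_k$, gives $(\Ko\circ\decoder_{\al_k})(\xi_k)\to\data$ in $\V$.

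\emph{Identifying weak limits.} Fix $\rho$ with $\norm{\xi_k}_2\le\rho$ for all $k$, and let $\xi_{k(n)}\rightharpoonup\xi^+$ be any weakly convergent subsequence, which exists by boundedness. First I would observe that \ref{ass:well5} together with \ref{ass:well7} forces $\decoder$ itself to be weakly sequentially continuous on bounded sets (split $\decoder(\xi_{k(n)})-\decoder(\xi^+)$ through an intermediate $\decoder_\al$ and let $\al\to0$ afterwards), so that $\Ko\circ\decoder$ is weakly sequentially continuous. Then for every $\psi\in\V$ one has $\innerprod{\psi,(\Ko\circ\decoder)(\xi_{k(n)})}\to\innerprod{\psi,(\Ko\circ\decoder)(\xi^+)}$, while $\norm{(\Ko\circ\decoder)(\xi_{k(n)}) - (\Ko\circ\decoder_{\al_{k(n)}})(\xi_{k(n)})}\le\norm{\Ko}\,\Delta_\rho(\al_{k(n)})\to0$ and $(\Ko\circ\decoder_{\al_k})(\xi_k)\to\data$; hence $(\Ko\circ\decoder)(\xi^+)=\data$, that is, $\xi^+$ solves \eqref{eq:sip}. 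Weak lower semicontinuity of $\regg{\edot}$ and the a priori bound give $\regg{\xi^+}\le\liminf_n\regg{\xi_{k(n)}}\le\limsup_k\regg{\xi_k}\le\regg{\xi^\dag}$, and since $\xi^\dag$ is $\regg{\edot}$-minimizing these are all equalities and $\xi^+$ is an $\regg{\edot}$-minimizing solution. This proves \ref{thm:conv2}.

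\emph{Strong convergence and conclusion.} The equality in the last display shows $\regg{\xi_{k(n)}}\to\regg{\xi^+}$; combined with componentwise convergence (a consequence of weak $\ell^2$-convergence) and a finite-set/tail splitting of $\La$, this upgrades weak convergence to $\regg{\xi_{k(n)}-\xi^+}\to0$, whence $\norm{\xi_{k(n)}-\xi^+}_2\le\regg{\xi_{k(n)}-\xi^+}/\underline{w}\to0$. This Radon--Riesz-type property of the weighted $\ell^1$-norm is \textbf{the one genuinely technical step}, and it is precisely the argument used in \cite{grasmair2008sparse,scherzer2009variational}; it gives \ref{thm:conv1}. Finally, \ref{thm:conv3} follows by the usual subsequence argument: if $\xi_k\not\to\xi^\dag$, some subsequence stays a fixed distance away from $\xi^\dag$, yet by boundedness and \ref{thm:conv1}--\ref{thm:conv2} it has a further subsequence converging strongly to an $\regg{\edot}$-minimizing solution, which by uniqueness must be $\xi^\dag$ --- a contradiction.
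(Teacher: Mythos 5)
Your proposal is correct and follows essentially the same route as the paper: compare $\xi_k$ with $\xi^\dag$ to get the a priori bounds $\limsup_k\regg{\xi_k}\le\regg{\xi^\dag}$ and $(\Ko\circ\decoder_{\al_k})(\xi_k)\to\data$, extract a weakly convergent subsequence by coercivity, identify the limit as an $\regg{\edot}$-minimizing solution via \ref{ass:well7} and weak lower semicontinuity, upgrade to norm convergence by the Radon--Riesz property of the weighted $\ell^1$-norm (the paper cites Lemma~2 of \cite{grasmair2008sparse} for exactly this), and finish (c) with the standard subsequence argument. The only difference is that you explicitly verify the weak sequential continuity of $\decoder$ from \ref{ass:well5} and \ref{ass:well7}, a detail the paper leaves implicit.
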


\begin{proof} 
Let $\rho > \norm{\xi^\dagger}$ and write $\al_k \coloneqq \alpha(\delta_k)$,
$\decoder_k \coloneqq \decoder_{\al_k}$. By definition of $\xi_k$, we have
\begin{align}\nonumber
	\mathcal{S}_{\alpha_k, \data_k}(\xi_k)
	&= 
	\norm{(\Ko \circ \decoder_{k}) (\xi_k) - \data_k}^2 
	+ \alpha_k \regg{\xi_k}
	\\ \nonumber
	 & 
	\leq
	  \norm{(\Ko \circ \decoder_k) (\xi^\dag) - \data_k}^2 
+ \alpha_k \regg{\xi^\dag}
\\ \label{eq:conv-aux}
	&\leq 
	 \kl{ \norm{\Ko} \Delta_{\rho}(\al_k)  + \delta_k }^2  
	+  \alpha_k \regg{\xi^\dag} \,.
\end{align}
The right hand side in \eqref{eq:conv-aux}  tends to $0$,
which together with the estimate $\norm{\data-\data_k} \leq \delta_k$ and \eqref{eq:alpha} yields 
\begin{align}      \label{eq:conv-aux2}
 \lim_{ k \to \infty} \norm{(\Ko \circ \decoder_k) (\xi_k) - \data} &=  0  \,,
\\ 
 \label{eq:conv-aux3}
 \limsup_{k \rightarrow \infty} \regg{\xi_k} &\leq \regg{\xi^\dag} \,.
 \end{align}
The coercivity of $\regg{\edot}$ and \eqref{eq:conv-aux3}  in turn imply that there is  some  weakly convergent subsequence  $(\xi_{k(n)})_{n \in \N}$. We denote its  weak limit  by $\hat{\xi}$. 

Using \eqref{eq:conv-aux2} and  \ref{ass:well7}, we see that $\hat{\xi}$ solves \eqref{eq:sip}.
The lower semi-continuity of $\regg{\edot}$ and \eqref{eq:conv-aux3} imply  
\begin{equation*}
\regg{\hat{\xi}} \leq \liminf_{n \rightarrow \infty} \regg{\xi_{k(n)}} \leq \limsup_{n \rightarrow \infty} \regg{\xi_{k(n)}} \leq \regg{\xi^\dag}  \,.
\end{equation*}
Hence  $\hat{\xi}$ is an $\regg{\edot}$-minimizing solution of \eqref{eq:sip} and 
$\regg{\xi_{k(n)}} \to  \regg{\xi^\dag}$.
According to \cite[Lemma~2]{grasmair2008sparse}, weak convergence of $(\xi_{k(n)})_{n \in \N}$ together with the convergence of $\regg{\xi_{k(n)}}$ implies 
$\norm{\xi_{k(n)} - \hat \xi} \to 0$.  
Finally, if  \eqref{eq:sip} has a unique $\regg{\edot}$-minimizing solution, 
then every  subsequence of $(\xi_{k})_{k \in \N}$ has a subsequence converging   
to $\xi^\dag$, which implies  
 $\xi_k \to  \xi^\dag$.
\end{proof}

The existence of a solution $\xi^\dag$ to \eqref{eq:sip} always implies the existence of a solution  $\signal^\dag \coloneqq \decoder(\xi^\dag)$ to the original problem.
Moreover, Theorem~\ref{thm:conv} shows strong convergence of the  regularized solutions in the coefficient space $ \ell^2(\La)$. By assuming that the synthesis mappings  $\decoder_\al$ are uniformly Lipschitz continuous we further  get the strong convergence of the regularized solutions in the signal space.

\begin{theorem}[Convergence in signal space] \label{thm:conv-x}
Let the assumptions of Theorem~\ref{thm:conv} hold. Assume, additionally, that
$(\Ko \circ \decoder)(\xi) = \data$ has a unique $\regg{\edot}$-minimizing solution $\xi^\dag$,
 and that  $(\decoder_{\al})_{\al > 0}$ are uniformly Lipschitz. 
 Consider  $\xi_k \in \argmin \mathcal{S}_{\al, \data_k}$, and define  $\signal^\dag \coloneqq \decoder (\xi^\dag)$, $\signal_k \coloneqq \decoder_{\al(\delta_k)} (\xi_k)$.
Then $\lim_{k \to \infty} \norm{\signal_k - \signal^\dag} \rightarrow 0$.

\end{theorem}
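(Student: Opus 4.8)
The plan is to combine the strong convergence of the coefficients $\xi_k \to \xi^\dag$ in $\ell^2(\La)$ with the uniform Lipschitz bound on the synthesis operators and the approximation property~\ref{ass:well7}. Since all hypotheses of Theorem~\ref{thm:conv} are in force and the $\regg{\edot}$-minimizing solution $\xi^\dag$ of \eqref{eq:sip} is unique, Theorem~\ref{thm:conv}\ref{thm:conv3} yields $\xi_k \to \xi^\dag$ strongly. In particular $(\xi_k)_{k\in\N}$ is bounded, so we may fix $\rho > 0$ with $\norm{\xi_k} \le \rho$ for all $k$ and $\norm{\xi^\dag} \le \rho$. Let $L$ denote a common Lipschitz constant of the family $(\decoder_\al)_{\al>0}$, and abbreviate $\al_k \coloneqq \al(\delta_k)$; note $\al_k \to 0$ by \eqref{eq:alpha}.

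Next I would estimate, using the triangle inequality and inserting the auxiliary point $\decoder_{\al_k}(\xi^\dag)$,
\[
\norm{\signal_k - \signal^\dag}
= \norm{\decoder_{\al_k}(\xi_k) - \decoder(\xi^\dag)}
\le \norm{\decoder_{\al_k}(\xi_k) - \decoder_{\al_k}(\xi^\dag)} + \norm{\decoder_{\al_k}(\xi^\dag) - \decoder(\xi^\dag)}.
\]
The first summand is at most $L\,\norm{\xi_k - \xi^\dag}$ by uniform Lipschitz continuity, and tends to $0$ since $\xi_k \to \xi^\dag$. The second summand is at most $\Delta_\rho(\al_k)$ by the definition of $\Delta_\rho$ in~\ref{ass:well7} (as $\norm{\xi^\dag}\le\rho$), and tends to $0$ because $\al_k \to 0$ and \ref{ass:well7} holds for the fixed radius $\rho$. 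Adding the two bounds gives $\norm{\signal_k - \signal^\dag} \to 0$, which is the claim.

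This argument is essentially immediate once Theorem~\ref{thm:conv}\ref{thm:conv3} is invoked, so there is no genuine obstacle; the only point worth care is to route the triangle inequality through $\decoder_{\al_k}(\xi^\dag)$ rather than through $\decoder(\xi_k)$, so that the Lipschitz estimate is applied to the operators $\decoder_{\al_k}$ for which a uniform constant is assumed — Assumption~\ref{ass:well} posits only weak sequential continuity of the $\decoder_\al$, and, absent the present additional hypothesis, no continuity of the limit operator $\decoder$ is available. Equivalently one could first show that $\decoder$ inherits the Lipschitz constant $L$ on bounded sets by passing to the limit in $\norm{\decoder(\xi) - \decoder(\eta)} \le 2\Delta_\rho(\al) + L\,\norm{\xi - \eta}$ and then split off $\decoder(\xi_k)$ instead; both routes use only the triangle inequality, boundedness of $(\xi_k)_{k\in\N}$, and \ref{ass:well7}.
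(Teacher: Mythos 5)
Your proof is correct and follows essentially the same route as the paper: the triangle inequality through the auxiliary point $\decoder_{\al_k}(\xi^\dag)$, the uniform Lipschitz constant $L$ applied to $\norm{\decoder_{\al_k}(\xi_k)-\decoder_{\al_k}(\xi^\dag)}$, and Theorem~\ref{thm:conv}\ref{thm:conv3} together with \ref{ass:well7} to send both terms to zero. Your explicit bound of the second summand by $\Delta_\rho(\al_k)$ only makes precise what the paper abbreviates as ``by assumption,'' so there is nothing genuinely different here.
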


\begin{proof}
Following the notions of the proof of Theorem~\ref{thm:conv}  we have 
\begin{align*}
\norm{\signal^\dag - \signal_k} 
&= \norm{\decoder(\xi^\dag) - \decoder_{k}(\xi_k)} 
\\ 
&\leq  
\norm{\decoder(\xi^\dag) - \decoder_{k}(\xi^\dag)} 
+
\norm{\decoder_{k}(\xi^\dag) - \decoder_{k}(\xi_k)} 
\\
&\leq  
\norm{\decoder(\xi^\dag) - \decoder_{k}(\xi^\dag)} 
+
L \norm{\xi^\dag - \xi_k} \,, 
\end{align*}
where $L$ is a uniform Lipschitz-constant for $(\decoder_{\al})_{\al > 0}$.
By assumption, $\norm{\decoder(\xi^\dag) - \decoder_k(\xi^\dag)} \to 0$ and according   to Theorem~\ref{thm:conv} we have $\norm{\xi^\dag - \xi_k}
\to 0$. This  yields $\norm{\signal^\dag - \signal_k} \to 0$ and concludes the proof.  
\end{proof}


\begin{remark}
With $\xi_k \in \argmin \mathcal{S}_{\alpha, \data_k}$,  Theorem~\ref{thm:conv}  states that  $(\xi_k)_{k\in \N}$ has a subsequence $(\xi_{k(n)})_{n\in \N}$ converging   to some $\regg{\edot}$-minimizing solution  $\xi^\dag$
of $(\Ko \circ \Ph)(\xi) = \data$. If we additionally assume that $(\decoder_{\al})_{\al >0}$ are uniformly  Lipschitz-continuous, then following the proof of 
Theorem~\ref{thm:conv-x}  one shows that   $\decoder_{k(n)}(\xi_{k(n)}) \to  \decoder(\xi^\dag) = \signal^\dag$.
In particular,  the limits $\signal^\dag$ are characterized  as solutions of $\Ko \signal = \data$ having  a representation using an $\regg{\edot}$-minimizing solution $\xi^\dagger$ of the coefficient problem $(\Ko \circ \Ph)(\xi) = \data$.
\end{remark}

A network is typically given as a composition of trained Lipschitz-continuous functions and given activation functions. The standard activation functions all satisfy a Lipschitz-continuity condition, e.g. the ReLU, $\tanh$ or sigmoid function are all Lipschitz-continuous.  The uniform Lipschitz-continuity assumption on the networks $\decoder_{\al}$ can therefore  easily  be fulfilled.


\subsection{Convergence rates}


Convergence rates name quantitative error estimates  between exact and regularized solutions. In order to derive such results we have to make some additional assumptions on the interplay between the regularization functional and the operators  $\Ko$, $\decoder$, $\decoder_\al$.


\begin{assumption} \label{ass:wellrate}
Let  $\xi^\dag$ be an 
$\regg{\edot}$-minimizing  solution of  \eqref{eq:sip}
with $\data \in \V$. Assume there are  
$\beta_1, \beta_2, \sigma > 0$ and
$\rho > \regg{\xi^\dag}$ such that
\begin{equation*}
\regg{\xi} - \regg{\xi^\dag} \geq \beta_1 \norm{\xi - \xi^\dag} 
- \beta_2 \norm{\Ko \kl{  \decoder(\xi) -  \decoder(\xi^\dag) }}
\end{equation*}
for all $\xi$ with $\regg{\xi} < \rho$ and 
$ \norm{\Ko \kl{  \decoder(\xi) -  \decoder(\xi^\dag) }} < \sigma$.
\end{assumption}


\begin{proposition}[Quantitative error estimates] \label{prop:rate}
Let Assumptions \ref{ass:well}, \ref{ass:wellrate} and  \ref{ass:well7} hold. Furthermore, assume that $\data_\delta \in \V$ satisfies $\norm{\data_\delta - \data} \leq \delta$ choose  $\xi_{\alpha, \delta} \in \argmin \synthesis$, and set $h_{\al, \delta} \coloneqq  \norm{\Ko \kl{ \decoder_{\al} (\xialpha) -  \decoder (\xialpha)}}$ and $g_{\rho, \al, \delta} \coloneqq (\delta + \norm{\Ko}\Delta_\rho(\al))$.
Then, the  following hold:
\begin{align*}
&\norm{\xi_{\alpha, \delta} - \xi^\dag} \leq \frac{g_{\rho, \al, \delta}^2 + \alpha \beta_2 (\delta+h_{\al, \delta})  + \frac{(\alpha \beta_2)^2}{2} }{\alpha \beta_1} \,,\\
&\norm{(\Ko \circ \decoder_{\al}) (\xi_{\alpha, \delta}) - \data_\delta}^2 \leq 2g_{\rho, \al, \delta}^2 
+ 2\alpha \beta_2 (\delta+h_{\al, \delta})  + (\alpha \beta_2)^2\,.
\end{align*}
\end{proposition}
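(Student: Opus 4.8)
The plan is to run the now-standard source-condition argument for $\ell^1$-type regularization (as in \cite{grasmair2008sparse}), carefully carrying along the two extra error contributions, $\Delta_\rho(\al)$ and $h_{\al,\delta}$, produced by the $\al$-dependence of the synthesis operators. First I would exploit the minimizing property of $\xialpha$ in \eqref{eq:snett} by comparing $\synthesis(\xialpha)$ with $\synthesis(\xi^\dag)$. Since $(\Ko\circ\decoder)(\xi^\dag)=\data$ and $\norm{\data_\delta-\data}\le\delta$, the triangle inequality together with Assumption~\ref{ass:well7} gives $\norm{(\Ko\circ\decoder_{\al})(\xi^\dag)-\data_\delta}\le\norm{\Ko}\Delta_\rho(\al)+\delta=g_{\rho,\al,\delta}$, so that
\begin{equation*}
\norm{(\Ko\circ\decoder_{\al})(\xialpha)-\data_\delta}^2+\alpha\regg{\xialpha}\le g_{\rho,\al,\delta}^2+\alpha\regg{\xi^\dag}.
\end{equation*}
Writing $D\coloneqq\norm{(\Ko\circ\decoder_{\al})(\xialpha)-\data_\delta}$, this rearranges to $\alpha\bigl(\regg{\xialpha}-\regg{\xi^\dag}\bigr)\le g_{\rho,\al,\delta}^2-D^2$.

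Next I would apply Assumption~\ref{ass:wellrate} to $\xi=\xialpha$, lower-bounding $\regg{\xialpha}-\regg{\xi^\dag}$ by $\beta_1\norm{\xialpha-\xi^\dag}-\beta_2\norm{\Ko(\decoder(\xialpha)-\decoder(\xi^\dag))}$; here one must know that $\xialpha$ lies in the admissible region $\{\regg{\xi}<\rho,\ \norm{\Ko(\decoder(\xi)-\decoder(\xi^\dag))}<\sigma\}$, which is the one point of the proof requiring care and which holds once $g_{\rho,\al,\delta}^2/\alpha$ is small enough (hence, under a parameter choice like \eqref{eq:alpha}, for all small $\delta$). The remaining discrepancy term is controlled by the triangle inequality and the definition of $h_{\al,\delta}$: using $(\Ko\circ\decoder)(\xi^\dag)=\data$,
\begin{equation*}
\norm{\Ko(\decoder(\xialpha)-\decoder(\xi^\dag))}=\norm{(\Ko\circ\decoder)(\xialpha)-\data}\le D+\delta+h_{\al,\delta}.
\end{equation*}
Combining the last three displays yields $\alpha\beta_1\norm{\xialpha-\xi^\dag}+D^2\le g_{\rho,\al,\delta}^2+\alpha\beta_2 D+\alpha\beta_2(\delta+h_{\al,\delta})$.

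The only genuine trick is then Young's inequality $\alpha\beta_2 D\le\tfrac12 D^2+\tfrac12(\alpha\beta_2)^2$, which absorbs the cross term $\alpha\beta_2 D$ into $\tfrac12 D^2$ and leaves
\begin{equation*}
\alpha\beta_1\norm{\xialpha-\xi^\dag}+\tfrac12 D^2\le g_{\rho,\al,\delta}^2+\alpha\beta_2(\delta+h_{\al,\delta})+\tfrac12(\alpha\beta_2)^2.
\end{equation*}
Dropping the nonnegative $\tfrac12 D^2$ and dividing by $\alpha\beta_1$ gives the first claimed estimate; dropping instead the nonnegative $\alpha\beta_1\norm{\xialpha-\xi^\dag}$ and multiplying by $2$ gives the second. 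I expect no serious obstacle beyond keeping the constants straight; the subtle step is verifying admissibility of $\xialpha$ for Assumption~\ref{ass:wellrate}, which is why this proposition is naturally read as an a priori estimate to be fed into a subsequent parameter-choice-based rates corollary.
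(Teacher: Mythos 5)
Your argument is essentially identical to the paper's proof: compare $\synthesis(\xialpha)$ with $\synthesis(\xi^\dag)$ to get the bound $g_{\rho,\al,\delta}^2$, apply Assumption~\ref{ass:wellrate}, split $\norm{\Ko(\decoder(\xialpha)-\decoder(\xi^\dag))}$ via the triangle inequality into $D+\delta+h_{\al,\delta}$, and absorb the cross term $\alpha\beta_2 D$ by Young's inequality before dropping the appropriate nonnegative term for each estimate. Your remark on verifying that $\xialpha$ lies in the admissible region of Assumption~\ref{ass:wellrate} is a point the paper passes over silently, so flagging it is a small improvement rather than a deviation.
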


\begin{proof}
By definition of $\xi_{\alpha, \delta}$ we have 
\begin{align*}
\synthesis(\xialpha) &\leq \synthesis(\xi^\dag)\\
&=   \norm{(\Ko \circ \decoder_{\al}) (\xi^\dag) - \data_\delta}^2 + \alpha \regg{\xi^\dag} \\
&\leq   g_{\rho, \al, \delta}^2 + \alpha \regg{\xi^\dag} \,.
\end{align*}
Using Assumption~\ref{ass:wellrate} we get
\begin{align*}
g_{\rho, \al, \delta}^2 
&\geq \synthesis(\xialpha) - \alpha \regg{\xi^\dag} 
\\
&=\norm{(\Ko \circ \decoder_{\al}) (\xialpha) - \data_\delta}^2
+ \alpha \kl{ \regg {\xialpha}
-  \regg{\xi^\dag} } 
\\
&\geq \norm{(\Ko \circ \decoder_{\al}) (\xialpha) - \data_\delta}^2 
+ 
\alpha \beta_1 \norm{\xialpha - \xi^\dag} 
\\
&\hspace{3em} - \alpha \beta_2 
\norm{\Ko \kl{  \decoder(\xialpha) -  \decoder (\xi^\dag) }}
\\
&\geq  \norm{(\Ko \circ \decoder_{\al}) (\xialpha) - \data_\delta}^2 +
\alpha \beta_1 \norm{\xialpha - \xi^\dag}  \\
&\hspace{3em}  -\alpha \beta_2 \norm{(\Ko \circ \decoder_{\al}) (\xialpha) - \data_\delta}  - \alpha \beta_2 \delta\\
&\hspace{3em} 
- \alpha \beta_2 \norm{\Ko \kl{ \decoder_{\al} (\xialpha) -  \decoder (\xialpha)}}  \,.
\end{align*}
Applying Young's inequality $ab \leq a^2/2+b^2/2$  with  
$a = \alpha \beta_2$ and 
$b = \norm{(\Ko \circ \decoder_{\al}) (\xialpha) - \data_\delta}$ shows 
\begin{multline*}
g_{\rho, \al, \delta}^2 + \frac{(\alpha \beta_2)^2}{2} + \alpha \beta_2 (\delta+h_{\al, \delta}) \geq \frac{1}{2} \norm{(\Ko \circ \decoder_{\al}) (\xialpha) - \data_\delta}^2 + \alpha \beta_1 \norm{\xialpha - \xi^\dag}
\end{multline*}
and the assertion follows since the terms on the right hand side are non-negative.
\end{proof}

In the following we write $\alpha \asymp \delta$  if 
 $C_1 \delta \leq \alpha(\delta) \leq C_2 \delta$ for  $\delta >0$ and some constants $C_1, C_2 > 0$.


\begin{theorem}[Linear convergence rate] \label{thm:rate}
Let Assumptions~\ref{ass:well}, \ref{ass:wellrate} and  \ref{ass:well7} hold, let  $\alpha = \alpha(\delta)$ be such that $\alpha, h_{\alpha, \delta}, \Delta_\rho(\alpha(\delta)) \asymp \delta$ and choose $\xi_{\alpha, \delta} \in \argmin \synthesis$. 
Then the following hold:
\begin{enumerate}[label=(\alph*)]
\item $\norm{\xialpha - \xi^\dag} = \mathcal{O}(\delta)$,
\item Assume   that $\decoder_{\al}$ is Lipschitz and set $\signal_{\alpha, \delta} \coloneqq \decoder_{\al} (\xialpha)$, $\signal^\dag \coloneqq \decoder (\xi^\dag)$. Then 
$\norm{\signal_{\alpha, \delta} - \signal^\dag} = \mathcal{O}(\delta)$.
\end{enumerate}
\end{theorem}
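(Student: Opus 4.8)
The statement is an immediate corollary of the quantitative estimates in Proposition~\ref{prop:rate}, so the plan is essentially to substitute the assumed asymptotics $\alpha, h_{\alpha,\delta}, \Delta_\rho(\alpha(\delta)) \asymp \delta$ into those two inequalities and collect orders of magnitude. First I would fix the $\regg{\edot}$-minimizing solution $\xi^\dag$ and the constants $\beta_1,\beta_2,\sigma,\rho$ furnished by Assumption~\ref{ass:wellrate}. From $\Delta_\rho(\alpha(\delta)) \asymp \delta$ we get $g_{\rho,\alpha,\delta} = \delta + \norm{\Ko}\,\Delta_\rho(\alpha) = \mathcal{O}(\delta)$, hence $g_{\rho,\alpha,\delta}^2 = \mathcal{O}(\delta^2)$; from $\alpha \asymp \delta$ and $h_{\alpha,\delta}\asymp\delta$ we get $\alpha\beta_2(\delta + h_{\alpha,\delta}) = \mathcal{O}(\delta^2)$ and $(\alpha\beta_2)^2 = \mathcal{O}(\delta^2)$. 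Plugging these into the first bound of Proposition~\ref{prop:rate} gives
\[
\norm{\xialpha - \xi^\dag} \;\leq\; \frac{\mathcal{O}(\delta^2)}{\alpha\beta_1} \;=\; \frac{\mathcal{O}(\delta^2)}{\mathcal{O}(\delta)} \;=\; \mathcal{O}(\delta),
\]
which is part~(a). (The second bound of Proposition~\ref{prop:rate} analogously yields $\norm{(\Ko\circ\decoder_\alpha)(\xialpha) - \data_\delta} = \mathcal{O}(\delta)$, which is not needed but confirms the residual is of the expected order.)

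For part~(b) I would proceed exactly as in the proof of Theorem~\ref{thm:conv-x}: using the triangle inequality and the uniform Lipschitz constant $L$ of $(\decoder_\alpha)_{\alpha>0}$,
\[
\norm{\signalalpha - \signal^\dag} \;=\; \norm{\decoder_\alpha(\xialpha) - \decoder(\xi^\dag)} \;\leq\; L\,\norm{\xialpha - \xi^\dag} \;+\; \norm{\decoder_\alpha(\xi^\dag) - \decoder(\xi^\dag)},
\]
where the last term is bounded by $\Delta_\rho(\alpha)$ once $\rho$ is taken $\geq \norm{\xi^\dag}$ (using monotonicity of $\Delta_\rho$ in $\rho$ and Assumption~\ref{ass:well7}). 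The first summand is $\mathcal{O}(\delta)$ by part~(a) and the second is $\mathcal{O}(\delta)$ by the hypothesis $\Delta_\rho(\alpha(\delta)) \asymp \delta$, so $\norm{\signalalpha - \signal^\dag} = \mathcal{O}(\delta)$.

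The only genuinely delicate point I anticipate is legitimacy of invoking Assumption~\ref{ass:wellrate} at $\xialpha$: the variational inequality there is only assumed on the region $\set{\xi : \regg{\xi} < \rho,\ \norm{\Ko(\decoder(\xi) - \decoder(\xi^\dag))} < \sigma}$, so one must check that $\xialpha$ lands in this region. I expect this to follow, for all sufficiently small $\delta$, from Theorem~\ref{thm:conv}: it gives $\regg{\xialpha} \to \regg{\xi^\dag} < \rho$ and $\xialpha \to \xi^\dag$, and then the Lipschitz bound on $\decoder$ (together with $\norm{\decoder_\alpha(\xialpha) - \decoder(\xialpha)} \le \Delta_\rho(\alpha) \to 0$) forces $\norm{\Ko(\decoder(\xialpha) - \decoder(\xi^\dag))} \to 0 < \sigma$. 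Once this admissibility is in place, the remainder is pure bookkeeping of constants, and the $\mathcal{O}$-bounds above hold with constants depending only on $\beta_1,\beta_2,\norm{\Ko},L$ and the implied constants in $\asymp$.
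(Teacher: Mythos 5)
Your proof is correct and follows essentially the same route as the paper, which disposes of part (a) by citing the first inequality of Proposition~\ref{prop:rate} and (implicitly) of part (b) by the Lipschitz decomposition already used in Theorem~\ref{thm:conv-x}; your substitution of the asymptotics $\alpha, h_{\alpha,\delta}, \Delta_\rho(\alpha)\asymp\delta$ is exactly the intended bookkeeping. The admissibility issue you flag --- that $\xialpha$ must lie in the region $\regg{\xi}<\rho$, $\norm{\Ko(\decoder(\xi)-\decoder(\xi^\dag))}<\sigma$ before Assumption~\ref{ass:wellrate} can be invoked --- is a genuine subtlety that the paper already glosses over in the proof of Proposition~\ref{prop:rate}, and your resolution via Theorem~\ref{thm:conv} for small $\delta$ is the right way to close it.
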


\begin{proof}
This is an immediate consequence of the first inequality in Proposition~\ref{prop:rate}.   
\end{proof}


%

\begin{remark} \label{rem:ass-rate}
The convergence rate condition in Assumptions \ref{ass:well} is    
the same as by taking $q=r=1$ and choosing $\Ko \circ \decoder$ for the forward operator in 
\cite[Assumption 1]{grasmair2008sparse}. As shown in \cite{grasmair2008sparse}, this assumption is satisfied  if $ \Ko \circ  \decoder$ is  linear and injective on certain finite dimensional subspaces, and the so-called source condition $\partial  \regg{\edot} (\xi^\dag ) \cap \ran\kl{ (\Ko \circ  \decoder)^*} \neq \emptyset$ is satisfied. The latter condition is in particular satisfied if  $\La$ is finite and $\Ko \circ \decoder$ is injective.
\end{remark}

\section{Learned synthesis operator} \label{sec:trainingstrategy}

In this section we propose  a non-linear learned synthesis operator  which is  part of a sparse encoder-decoder pair. We describe  a modified U-net  that we use for the network architecture and  give details on the network training.

\subsection{Sparse encoder-decoder pair} 

Following the deep learning paradigm we select the synthesis operator 
$\decoder$ from a  parametrized class  $(\decoder_\theta)_{\theta \in \Theta}$ of neural network functions $ \decoder_\theta \colon \ell^2(\La) \to \U$ by adjusting them to certain training images $\signal_1, \dots,  \signal_m \in \U$. Here the subscript $\theta \in \Theta$ refers to the parameters 
of the network  taken from a  finite dimensional Hilbert space.
The  theoretical analysis presented in the previous section is based on the assumption that the images of interest can be represented as $\signal = \decoder_\theta ( \xi )$ where  
$\xi \in \ell^2(\La) $ has  small value of the regularizer $\regg{\edot}$. 
To find a suitable synthesis network $ \decoder_\theta$, we write the  
corresponding coefficients in the form $\xi =  \encoder_\eta  (\signal )$ with another neural network $\encoder_\eta  \colon \U \to \ell^2(\La)$ applied to $\signal$.

In order to enforce sparsity, a reasonable training strategy  is to take the network parameters as solutions of the constraint minimization problem
\begin{equation} \label{eq:constrainedmin} 
\left.\begin{aligned}
&\min_{\theta, \eta}   && \frac{1}{m} \sum_{i=1}^m  \regg{\encoder_\eta(\signal_i)} +  \beta_0 \norm{\theta}^2 + \gamma_0\norm{\eta}^2 \\
&\,  \text{s.t.}  &&\forall i\in \set{1 \dots , m  }\colon  \signal_i = \decoder_\theta (\encoder_\eta(\signal_i))    \,.
\end{aligned}
\right\}\end{equation}
Here  $\beta_0 \norm{\theta}^2 + \gamma_0 \norm{\eta}^2$  is a regularization term 
with   regularization parameters  $\beta_0, \gamma_0 > 0$.
As described in the  following subsection~\ref{sec:details},  
we use a modified tight-frame U-net as actual architecture   for $\decoder_\theta \circ \encoder_\eta$. The weights in   $\regg{\edot}$ are chosen as  $w_\la = 2^{-\ell}$, where $\la = (\ell,j)$ corresponds to the  coefficients in the  $\ell$-th downsampling step  of the network architecture (see Figure~\ref{fig:architecture}).

\subsection{Modified tight frame U-net}
\label{sec:details}

For the numerical results we take $\decoder_\theta \circ \encoder_\eta$ as a variation  of the tight-frame U-net  \cite{han2018framing}, where  we do not include the bypass-connections from the sequential layer to the concatenation layer.  The input is passed to a sequential layer which consists of convolution, batch normalization and a ReLU activation. The first sequential layer starts with $64$ channels. We pass the output of the first sequential layer to another sequential layer where we reduce the number of channels to $2$ to keep the output dimension of the encoder $\encoder$ reasonable. The output of the second sequential layer is then passed to the downsampling step with the filters $\Ho_\ell$ and   $\Lo_\ell$, that are given by the Haar wavelets low-pass and high-pass filters, respectively. 
The output of the high pass filtered image $\Ho_\ell$ then serves as one set of inputs for the decoder. This downsampling step is then recursively applied to the low  frequency  output $\Lo_\ell$. The low-pass output $\Lo_L$ of the last step of the network is also used as input for the decoder $\decoder$. In each downsampling step the dimensions are reduced by a factor of $2$ while the number of channels is increased by a factor of $2$.  

The upsampling is performed with the transposed filters of the downsampling step. The outputs of the upsampling step $\Ho_\ell^\intercal$ and $\Lo_\ell^\intercal$ are then concatenated and two sequential layers are applied. The channel sizes of these sequential layers are the same as the corresponding first sequential layer of the downsampling step. To obtain the final output we apply a $(1 \times 1)$-convolution with no activation function.
We use $L = 4$ downsampling and upsampling steps.
A visualization of one downsampling and upsampling step is depicted in Figure~\ref{fig:architecture}.

\begin{figure}[h]
\begin{center}
\resizebox{10cm}{!}{
\begin{tikzpicture}
\node (INP) at (0,0)  {};

\node (CONV) at (2,0) [rectangle, minimum height=1.5cm,minimum width=0.25cm, draw=black] {};

\node (BNORM) at (4,0) [rectangle, minimum height=1.5cm,minimum width=0.25cm, draw=black] {};

\draw[->, black, very thick] (INP) to node[above] {\Large Input} (CONV);
\draw[->, black, very thick] (CONV) to node[above] {\Large Seq.} (BNORM);

\node (DH) at (5,-1) [rectangle, minimum height=0.75cm,minimum width=0.5cm, draw=black] {\Large $\Ho_\ell$};
\node (DL) at (5,-2) [rectangle, minimum height=0.75cm,minimum width=0.5cm, draw=black] {\Large $\Lo_\ell$};

\draw[->, blue, very thick] (BNORM.south) |- node[left, circle, draw] {\Large $\downarrow 2$} (DL.west);
\draw[->, blue, very thick] (BNORM.south) |- (DH.west);

\node (REC) at (7,-2) [rectangle, minimum height=1cm,minimum width=1cm, fill=gray!20] {\Large $\unet_{\ell+1}$};

\draw[->, black, thick] (DL) to (REC);

\node (UH) at (9,-1) [rectangle, minimum height=0.75cm,minimum width=0.5cm, draw=black] {\Large $\Ho_\ell^\intercal$};
\node (UL) at (9,-2) [rectangle, minimum height=0.75cm,minimum width=0.5cm, draw=black] {\Large $\Lo_\ell^\intercal$};

\draw[->, black, thick] (REC) to (UL);
\draw[->, black, thick, dashed] (DH) to node[above] {\Large $\decoder$ \Large input} (UH); 

\node (UP1) at (11,0) [rectangle, minimum height=1.5cm,minimum width=0.25cm, draw=black] {};
\node (UP2) at (12,0) {\Large $\overset{\text{\Large Concat.}}{\cdots}$};
\node (UP3) at (13,0) [rectangle, minimum height=1.5cm,minimum width=0.25cm, draw=black] {};

\node (UPDECONV) at (14.5,0) [rectangle, minimum height=1.5cm,minimum width=0.25cm, draw=black] {};

\draw[->, red, very thick] (UL.east) -| node[right, circle, draw] {\Large $\uparrow 2$} (UP1.south);
\draw[->, red, very thick] (UH.east) -| (UP1.south);

\node (HELP) at (7,1) {};

\draw[->, black, very thick] (UP3) to node[above] {\Large Seq.} (UPDECONV);

\end{tikzpicture}
}
\end{center}
\caption{\textbf{Illustration of the used synthesis network.} We start by applying sequential layers which consist of convolution, batch normalization   and activation. After that we use a fixed filter to downsample the features. The network is then recursively applied to the output $\Lo_\ell$. We continue by upsampling using the same filters as before and concatenating the features. Lastly we apply sequential layers again to obtain the output.}
\label{fig:architecture}
\end{figure}
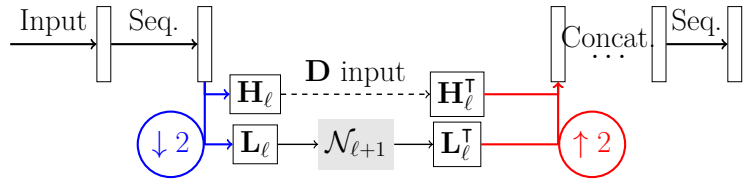 

Table~\ref{tab:network_size} shows the dimension of the feature outputs and the channel sizes for the $\ell^{\text{th}}$ step. Note that the number of channels for the highpass and lowpass filtered output $\Ho_\ell, \Lo_\ell$ is multiplied by $4$ because we have $3$ different highpass filters.

\begin{table}[htb!]
\begin{center}
\begin{tabular}{lrr}
\hline
            &  Dimension &  No. Channels \\
\hline\\
Sequential$_1^\ell$ & $\frac{512}{2^{\ell-1}} \times \frac{512}{2^{\ell-1}}$ & $64 \cdot 2^{\ell-1}$ \\
Sequential$_2^\ell$ & same & $2 \cdot 2^{\ell-1}$ \\
$\Ho_\ell, \Lo_\ell$ & $\frac{512}{2^{\ell}} \times \frac{512}{2^{\ell}}$ & $4 \cdot 2 \cdot 2^{\ell}$ \\

$\Ho^\intercal_\ell, \Lo^\intercal_\ell$ & $\frac{512}{2^{\ell-1}} \times \frac{512}{2^{\ell-1}}$  & $4 \cdot 2 \cdot 2^{\ell}$ \\
Concatenation & same & $4 \cdot 2 \cdot 2^{\ell}$\\
Sequential$_{3,4}^\ell$ & same & $64 \cdot 2^{\ell-1}$\\
\hline
\end{tabular}
\end{center}
\caption{\textbf{Dimension and channel sizes for the $\ell^{\text{th}}$ step of the network.} Same means that the same dimension as the one above is used.}
\label{tab:network_size}
\end{table}

 \subsection{Network training}
 
Finding minimizers of  \eqref{eq:constrainedmin} might be  unstable in practice and difficult  to solve. Therefore we consider a relaxed version where the constraint is added as a penalty. Hence we train the networks  by instead considering the following loss-function
\begin{equation} \label{eq:lossfunction}
\frac{1}{m}\sum_{i=1}^m \norm{\signal_i - \decoder_\theta (\encoder_\eta(\signal_i))}^2 + \alpha  \regg{\encoder_\eta (\signal_i)} + \beta\norm{\theta}^2 + \gamma\norm{\eta}^2 \,,
\end{equation}
where  $\alpha > 0$ is the regularization parameter in \eqref{eq:snett} 
and $\beta \triangleq \al \beta_0 $,  $\gamma \triangleq \al \gamma_0$.
In the numerical realization, the parameters have been chosen  empirically as $\al = 10^{-2}$, $\beta = \gamma = 10^{-4}$.
We minimize  \eqref{eq:lossfunction} with the Adam optimizer \cite{kingma2014adam} using proposed hyper-parameter settings for Adam for $150$ epochs and  a batch size of $6$.

As training data we use   the Low Dose CT Grand Challenge dataset provided by Mayo Clinic \cite{mccollough2016tu}. The complete dataset consists of $512 \times 512$ grayscale images from which we selected a subset containing only lung slices. This results in a dataset with a total of $m = 1311$ images of which $1151$ images (corresponding to 8 patients) were used for training purposes and $160$ (corresponding to the remaining 2 patients) were used for testing purposes. Each image was scaled to have pixel-values in the interval $[0,1]$.


\section{Numerical results}
\label{sec:num}

In this section we present numerical results for sparse view CT,  comparing the   DESYRE functional \eqref{eq:snett} with wavelet synthesis regularization, TV-regularization and a post-processing network.

\subsection{Sparse view CT} \label{subsec:comparisons}

For the numerical results we consider the Radon transform \cite{Nat86,helgason1999radon}  with undersampling 
in the angles as a forward operator $\Ko$.
Formally, the Radon transform is given by
\begin{equation*}
\forall (\theta, s) \in [0, \pi) \times \R \colon \quad 
(\Ko u)(\theta, s) = \int_{E(\theta, s)} u(x) \mathrm{d} x\,,
\end{equation*}
where $E(\theta, s) \triangleq \{(x_1, x_2) \in \R^2 \mid 
x_1\cos(\theta) +  x_2\sin(\theta) = s\}$.  The discretization of 
the Radon transform was performed using the Operator Discretization Library \cite{adler2017odl} using $n_\theta = 60$ equidistant angles in $[0, \pi)$ and $n_s = 768$ equidistant parallel beams in the interval  $[-3/2,3/2]$. In the case of noisy data we simulate the noise by adding $5\%$ Gaussian noise to the measurement-data. This data is visualized in Figure~\ref{fig:data}.

\begin{figure}
\begin{center}
\begin{subfigure}{.35\textwidth}
  \centering
  \includegraphics[scale=0.45, trim={3cm 1cm 3cm 1cm},clip, angle=90]{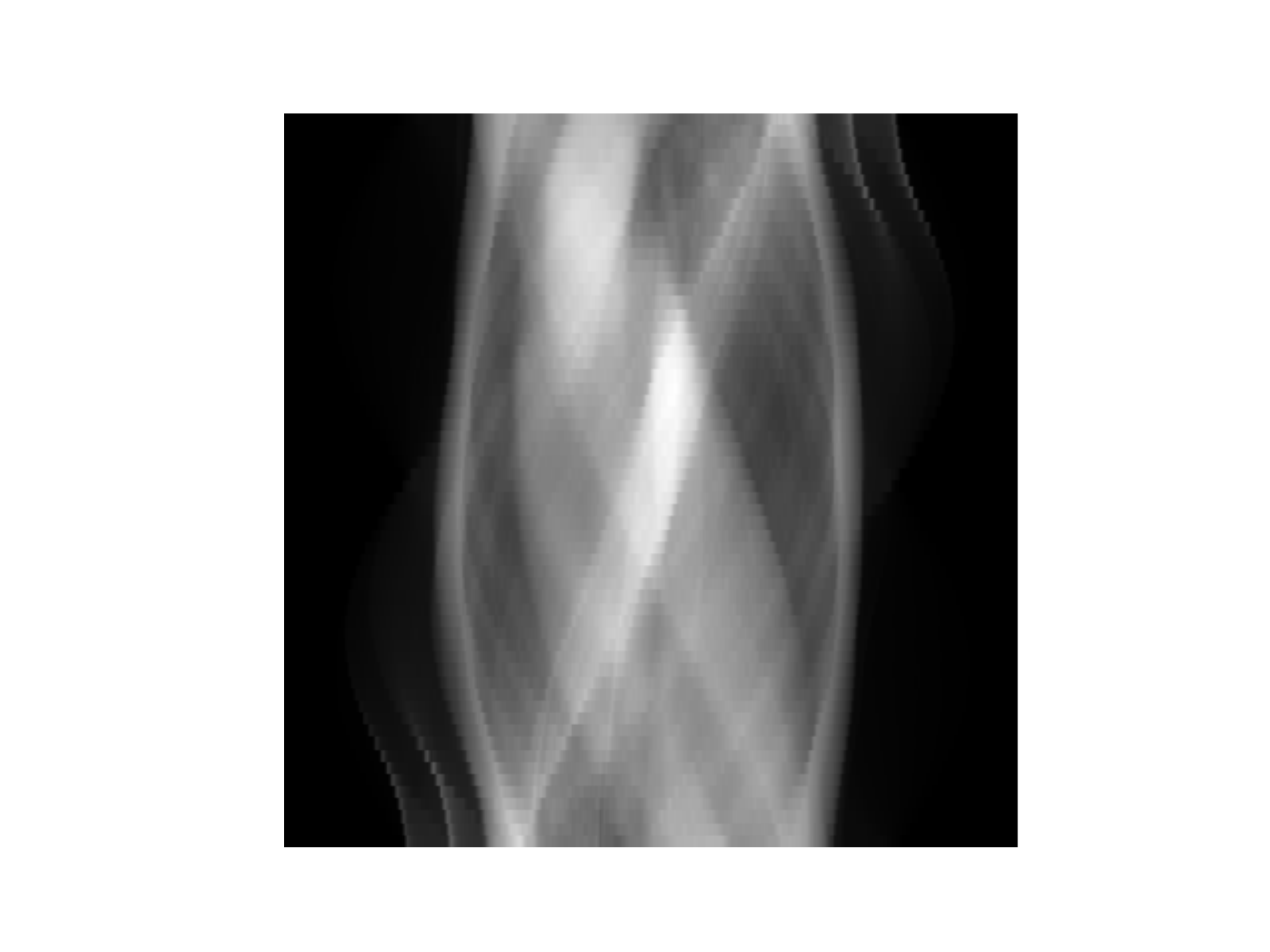}
\end{subfigure}%
\begin{subfigure}{.35\textwidth}
  \centering
  \includegraphics[scale=0.45, trim={3cm 1cm 3cm 1cm},clip, angle=90]{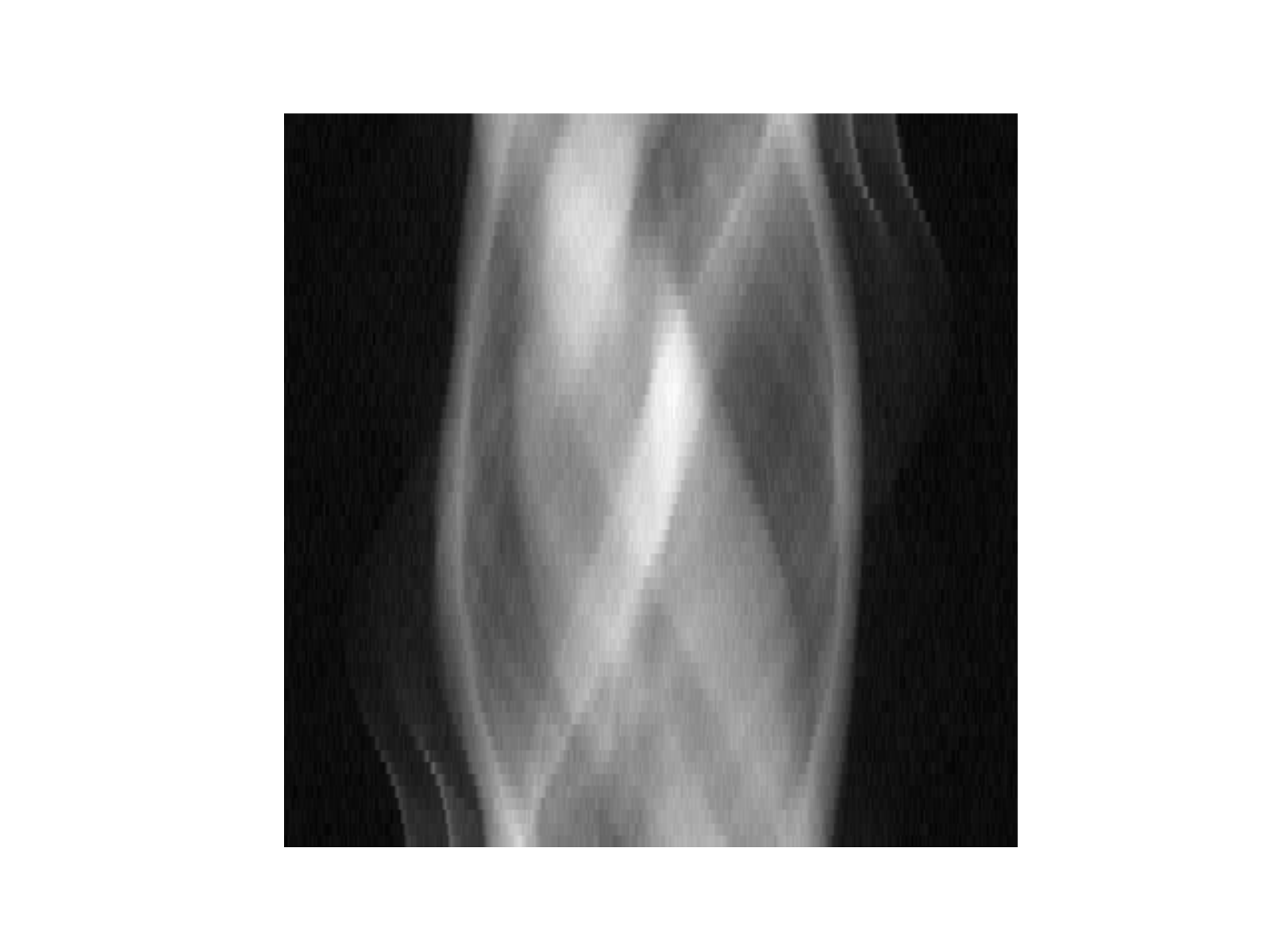}
\end{subfigure}\\
\caption{\textbf{Measurement-data using 60 projection views.} Left: Noise-free data. Right: Noisy data with $5\%$ Gaussian noise added.}
\label{fig:data}
\end{center}
\end{figure}

We minimize \eqref{eq:snett} using the FISTA algorithm \cite{beck2009fast} with a constant step-size. 
We found experimentally that we can use a step-size of at most $s = 10^{-3}$ to guarantee stability. For  the following numerical results we choose  $s = 10^{-3}$   and  use $2000$ iterations to minimize \eqref{eq:snett}. To obtain the initial coefficients we apply the encoder $\encoder_\eta$ to the FBP reconstruction. 
The appropriate regularization parameter value for the DESYRE approach has been chosen empirically to give the best results which resulted in $\al = 10^{-6}$ for the noise-free case and $\al = 3 \cdot 10^{-5}$ for the noisy case. Developing more efficient algorithms for minimizing  the functional  \eqref{eq:snett} is an important aspect of future work, that is beyond  the scope of the present article.

\subsection{Comparison methods}


We compare DESYRE to  wavelet synthesis regularization, 
TV-regularization and a post-processing network. 
In  wavelet synthesis regularization  and  TV-regularization we consider elements 
\begin{align} \label{eq:wave}
	\signal_{\al, \delta}  ^{\rm WS}
	&\in 
	\Ph \kl{\argmin_\xi \norm{\Ko \Ph (\xi) - \data_\delta}^2 +  \alpha \norm{\xi}_1 }
	\\  \label{eq:tv}
	\signal_{\al, \delta}  ^{\text{TV}} 
	&\in \argmin_\signal \norm{\Ko \signal - \data_\delta}^2 + \alpha 
	\norm{\signal}_{\rm TV}  \,, 
\end{align}
respectively.
Here $\Ph$  is the Wavelet synthesis operator corresponding to the Haar Wavelet basis and $\norm{\signal}_{\rm TV}$ is the discrete total variation of $\signal$. 
In order to minimize \eqref{eq:wave} and \eqref{eq:tv} numerically we use the FISTA algorithm \cite{beck2009fast} and the primal-dual algorithm \cite{chambolle2011first}, respectively. The step-sizes for both algorithms are chosen as the inverse of the operator norm of the discretized forward operator. For the Wavelet synthesis regularization we use $500$ iterations, whereas the TV-regularization needs about $4000$ iterations to converge.  For both methods the regularization parameter $\alpha$ was chosen to give the best results which resulted in $\alpha_{\rm WS} = 10^{-8}$ for wavelet synthesis regularization and $\alpha_{\rm TV} = 5 \cdot 10^{-5}$ for TV-regularization in the case of noise-free data and $\alpha_{\rm WS} = 2 \cdot 10^{-7}$ and $\alpha_{\rm TV} = 10^{-4}$ in the case of noisy data.
We initialize each algorithm with $\Ro_{\rm FBP}(\data_{\delta, i})$, where $\Ro_{\rm FBP}(\cdot)$ denotes the filtered back-projection.

As a post-processing network we use the  tight frame U-Net of  \cite{han2018framing}. For a given set of training images $\signal_1, \dots \signal_m$, the network $\Uo_\theta$ is trained to map the filtered back-projection reconstruction $\Ro_{\rm FBP}(\data_{\delta,i})$ to the residual  image   
$ \Ro_{\rm FBP}(\data_{\delta,i}) - \signal_i$. The reconstruction of the signal is then given by $\signal_{\delta}^{\rm Post} = \Ro_{\rm FBP}(\data_{\delta,i}) - \Uo_\theta(\Ro_{\rm FBP}(\data_{\delta,i}))$. 
To obtain images with streaking artefacts the FBP using the Hann filter was applied to the data $\data_i$. No noise was added for the training of the post-processing network.
To regularize the parameters of the network we add $\ell^2$-regularization with regularization parameter $\beta = 10^{-4}$. The network was then trained for $150$ epochs using the Adam optimizer \cite{kingma2014adam} with hyper-parameters as suggested in \cite{kingma2014adam} and a batch size of $3$.

\begin{figure}[htb!]
\begin{center}
\begin{subfigure}{.35\textwidth}
  \centering
  \includegraphics[scale=0.45, trim={3cm 1cm 3cm 1cm},clip]{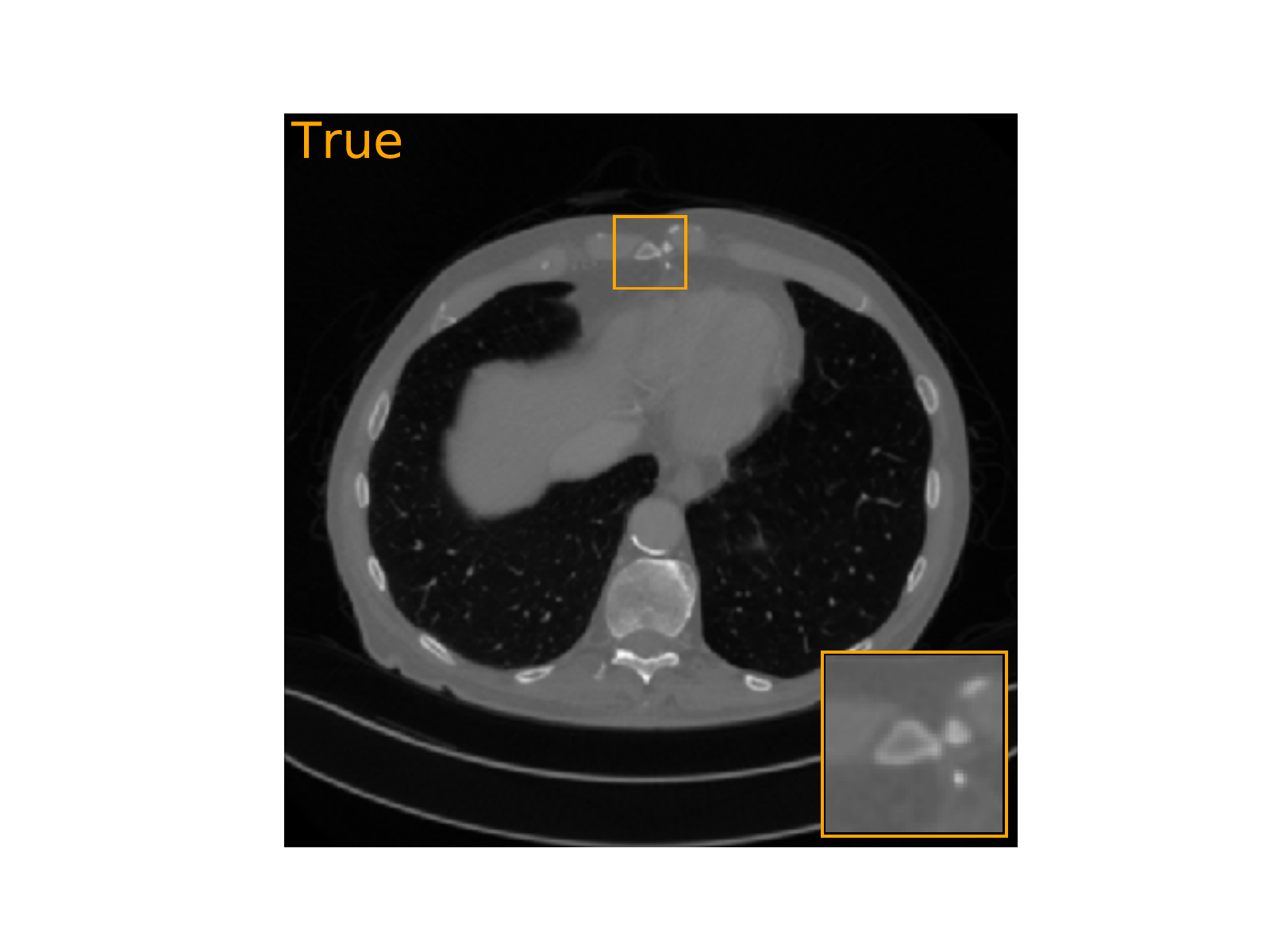}
\end{subfigure}%
\begin{subfigure}{.35\textwidth}
  \centering
  \includegraphics[scale=0.45, trim={3cm 1cm 3cm 1cm},clip]{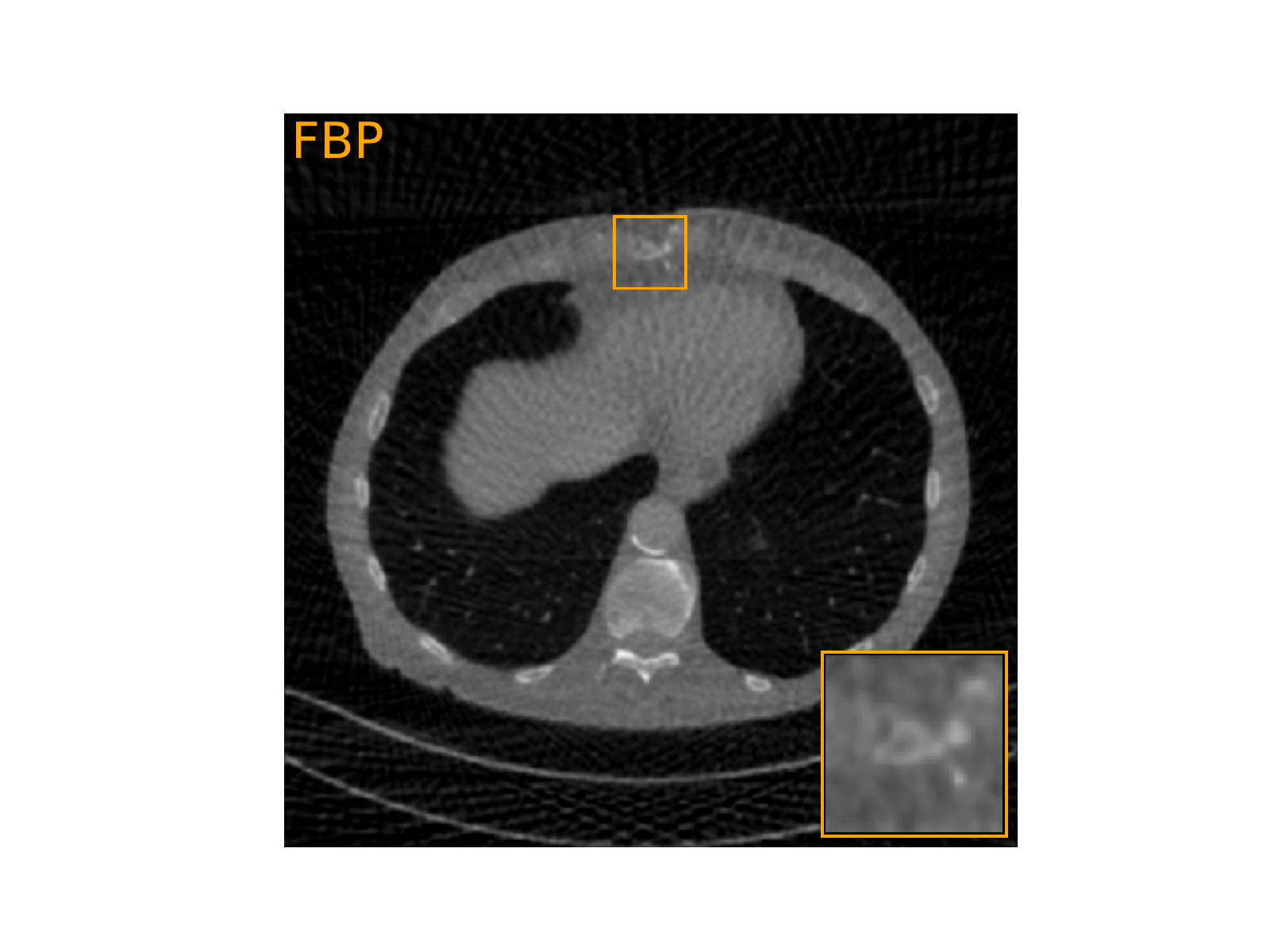}
\end{subfigure}\\

\begin{subfigure}{.35\textwidth}
  \centering
  \includegraphics[scale=0.45, trim={3cm 1cm 3cm 1cm},clip]{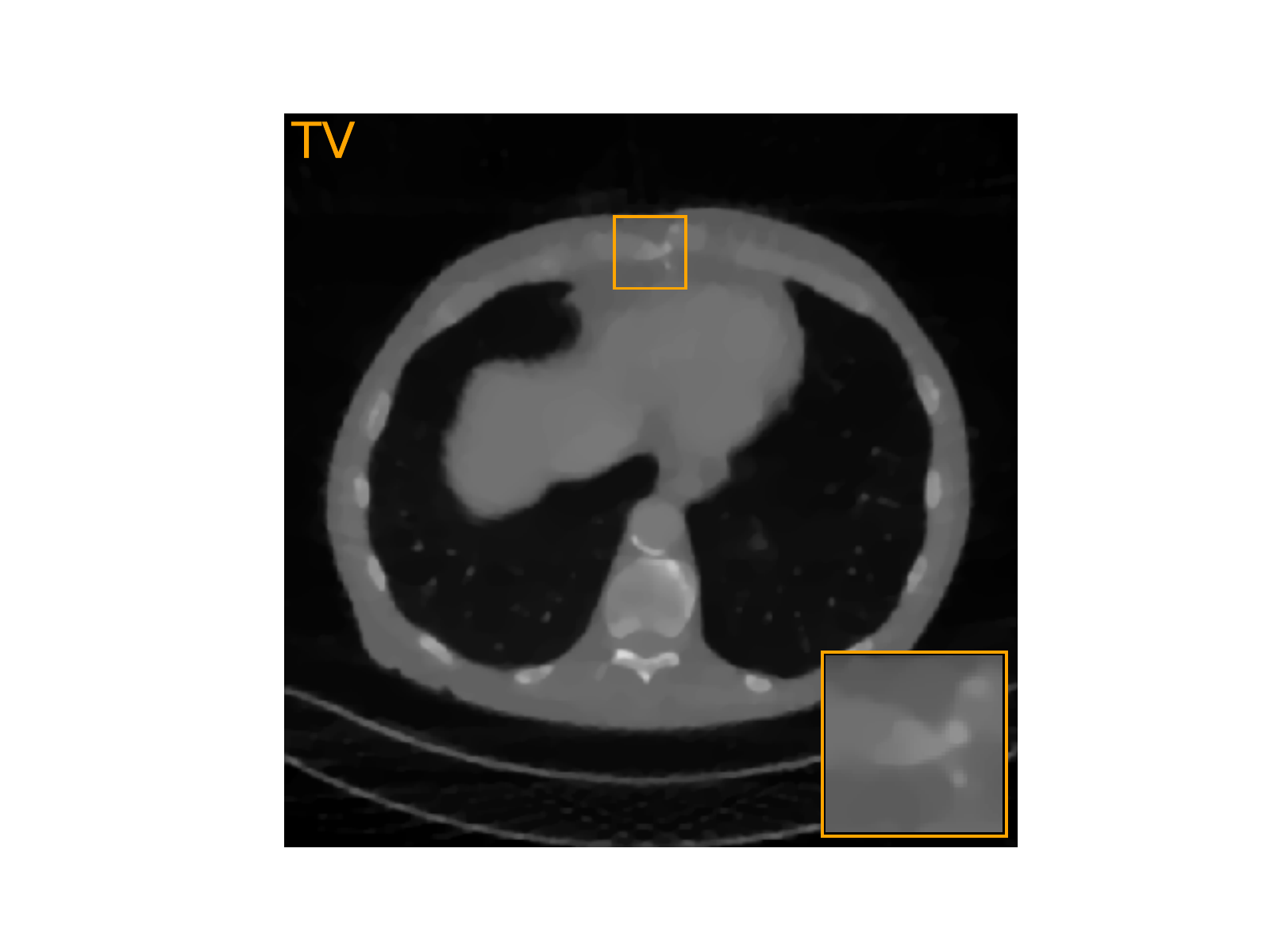}
\end{subfigure}%
\begin{subfigure}{.35\textwidth}
  \centering
  \includegraphics[scale=0.45, trim={3cm 1cm 3cm 1cm},clip]{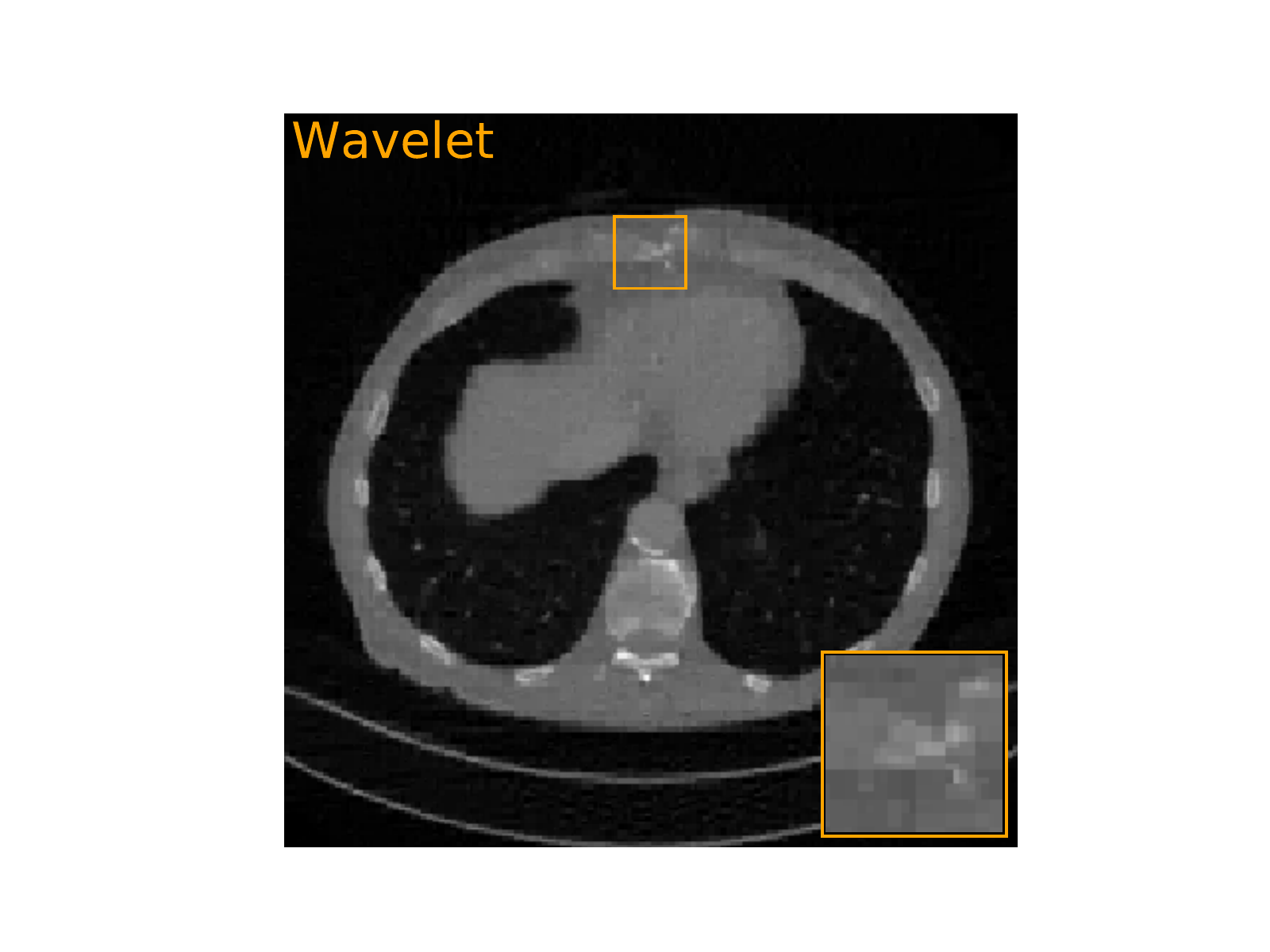}
\end{subfigure}\\

\begin{subfigure}{.35\textwidth}
  \centering
  \includegraphics[scale=0.45, trim={3cm 1cm 3cm 1cm},clip]{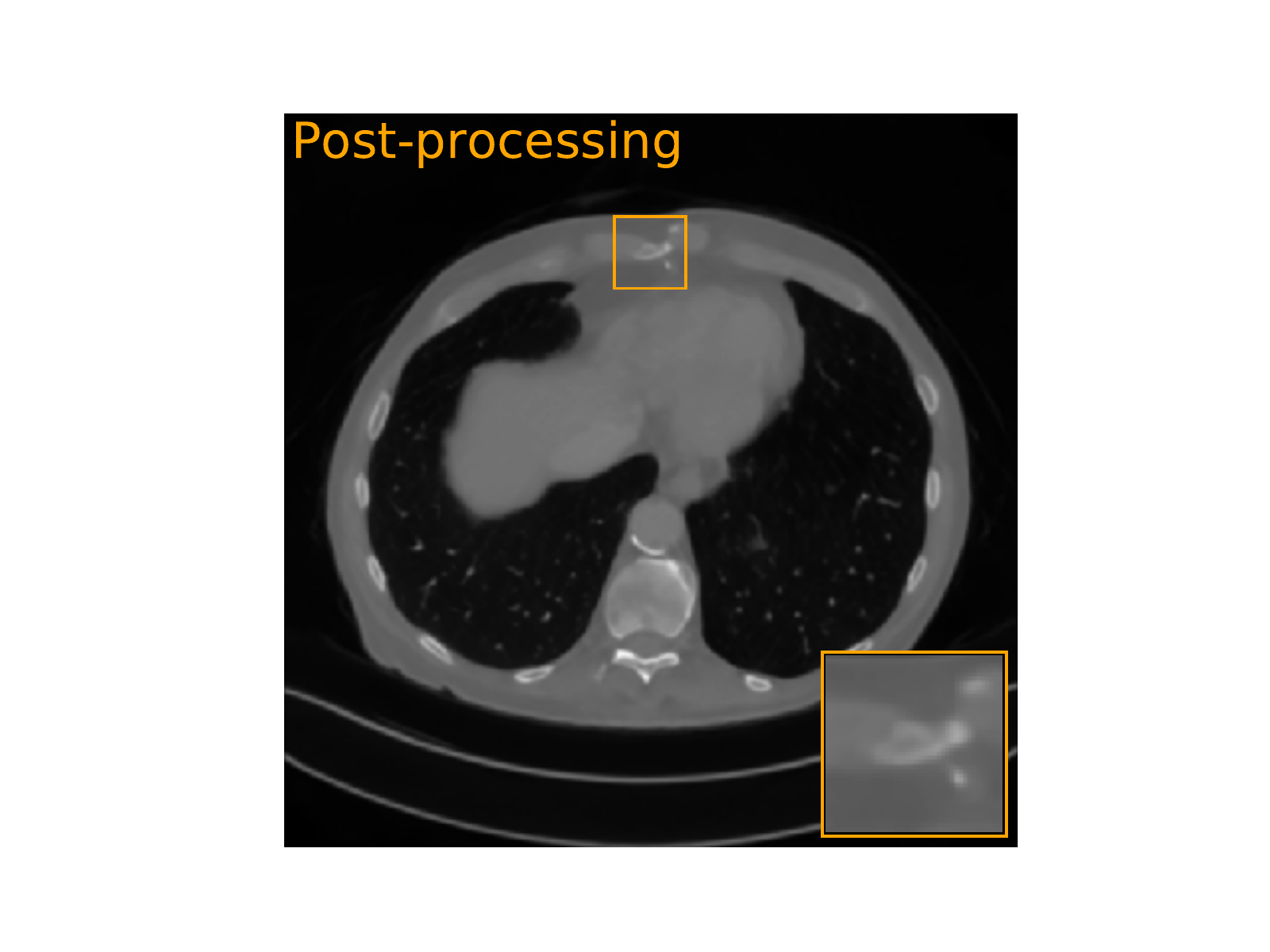}
\end{subfigure}%
\begin{subfigure}{.35\textwidth}
  \centering
  \includegraphics[scale=0.45, trim={3cm 1cm 3cm 1cm},clip]{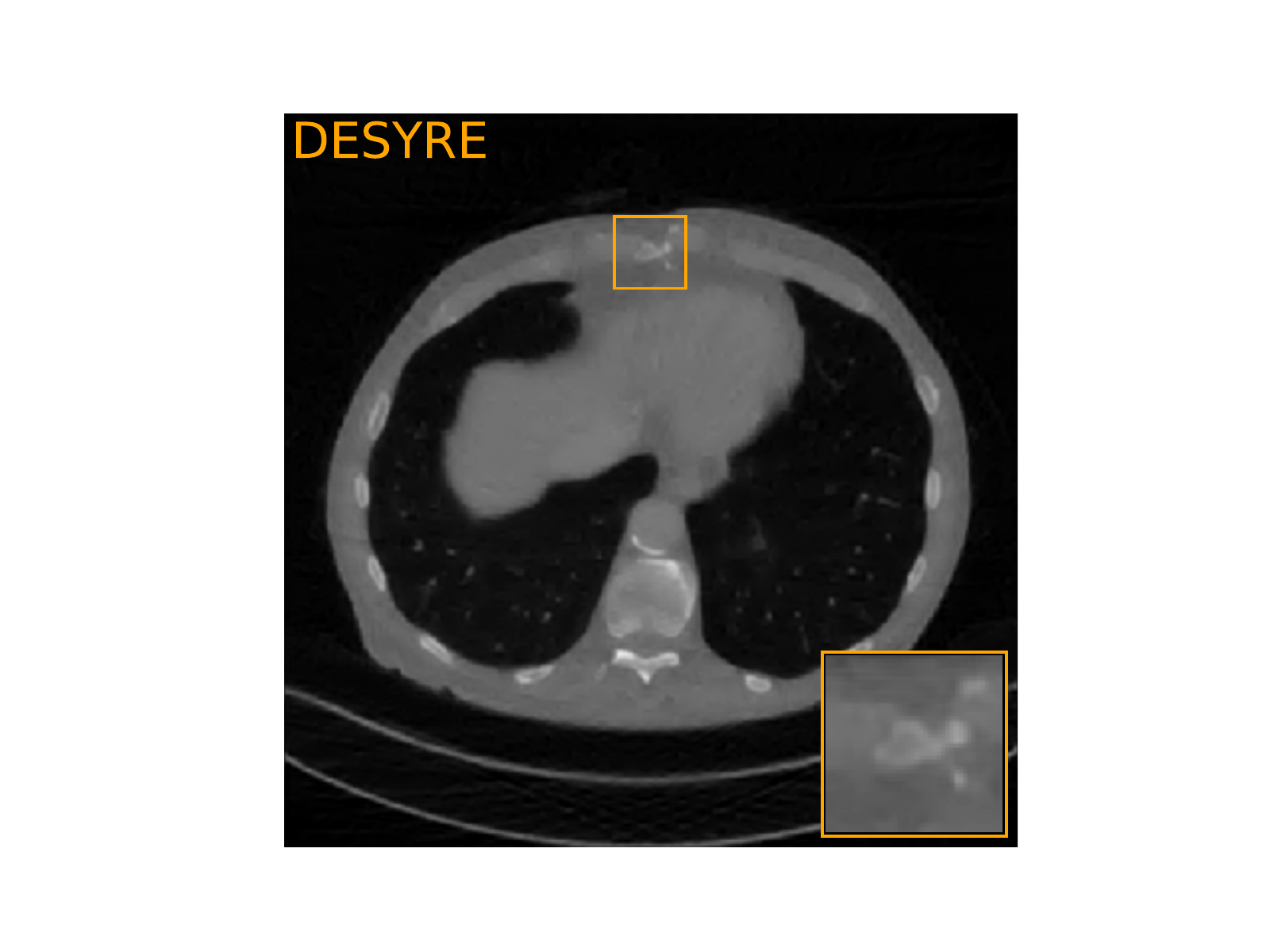}
\end{subfigure}\\
\caption{\textbf{Reconstruction results for $60$ views an noise-free data.} The subplot in the lower right corner shows a zoomed in version of the orange square.}
\label{fig:reconstruction}
\end{center}
\end{figure}

\begin{figure}[htb!]
\begin{center}
\begin{subfigure}{.35\textwidth}
  \centering
  \includegraphics[scale=0.45, trim={3cm 1cm 3cm 1cm},clip]{GROUNDTRUTH.pdf}
\end{subfigure}%
\begin{subfigure}{.35\textwidth}
  \centering
  \includegraphics[scale=0.45, trim={3cm 1cm 3cm 1cm},clip]{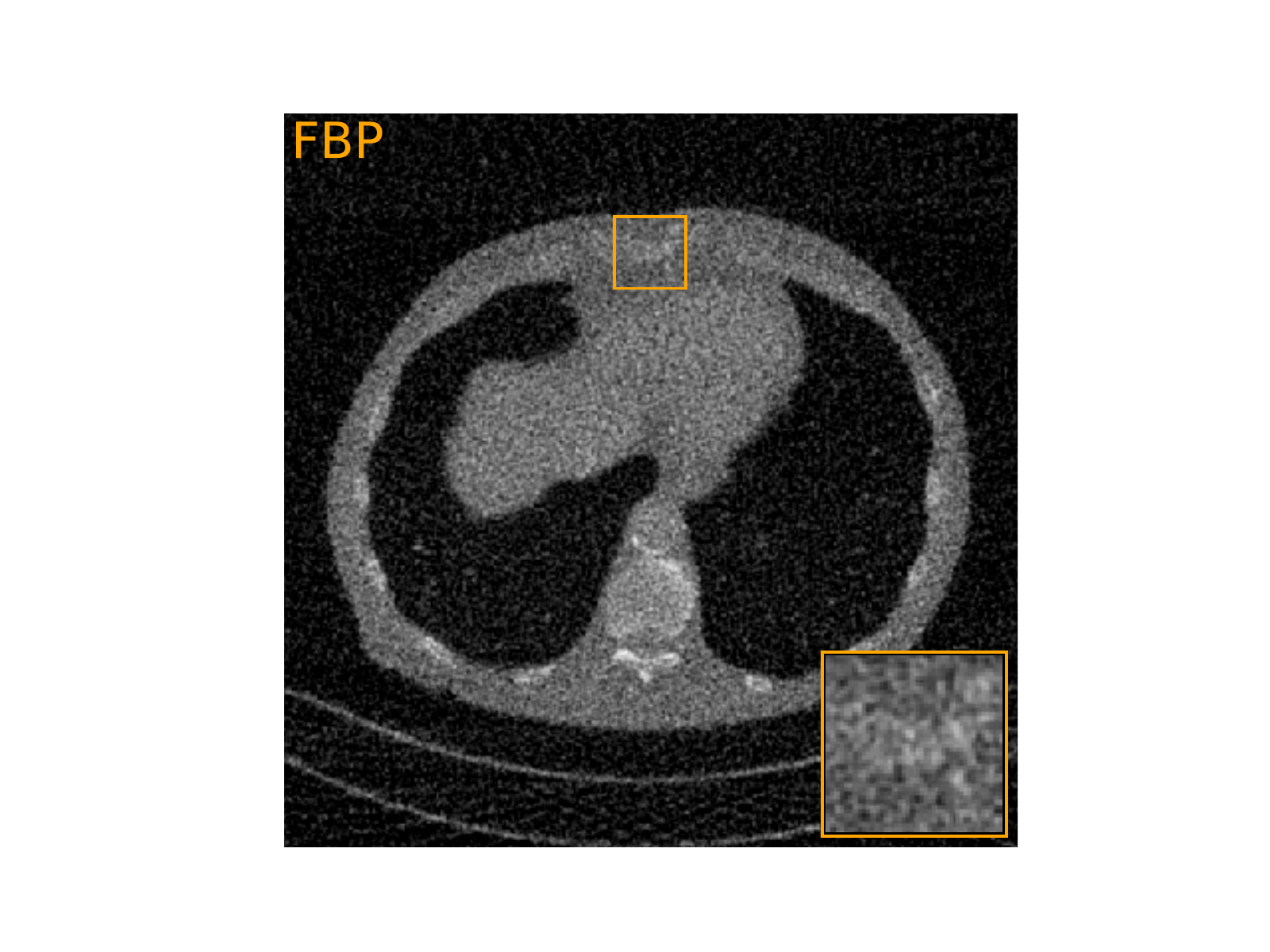}
\end{subfigure}\\

\begin{subfigure}{.35\textwidth}
  \centering
  \includegraphics[scale=0.45, trim={3cm 1cm 3cm 1cm},clip]{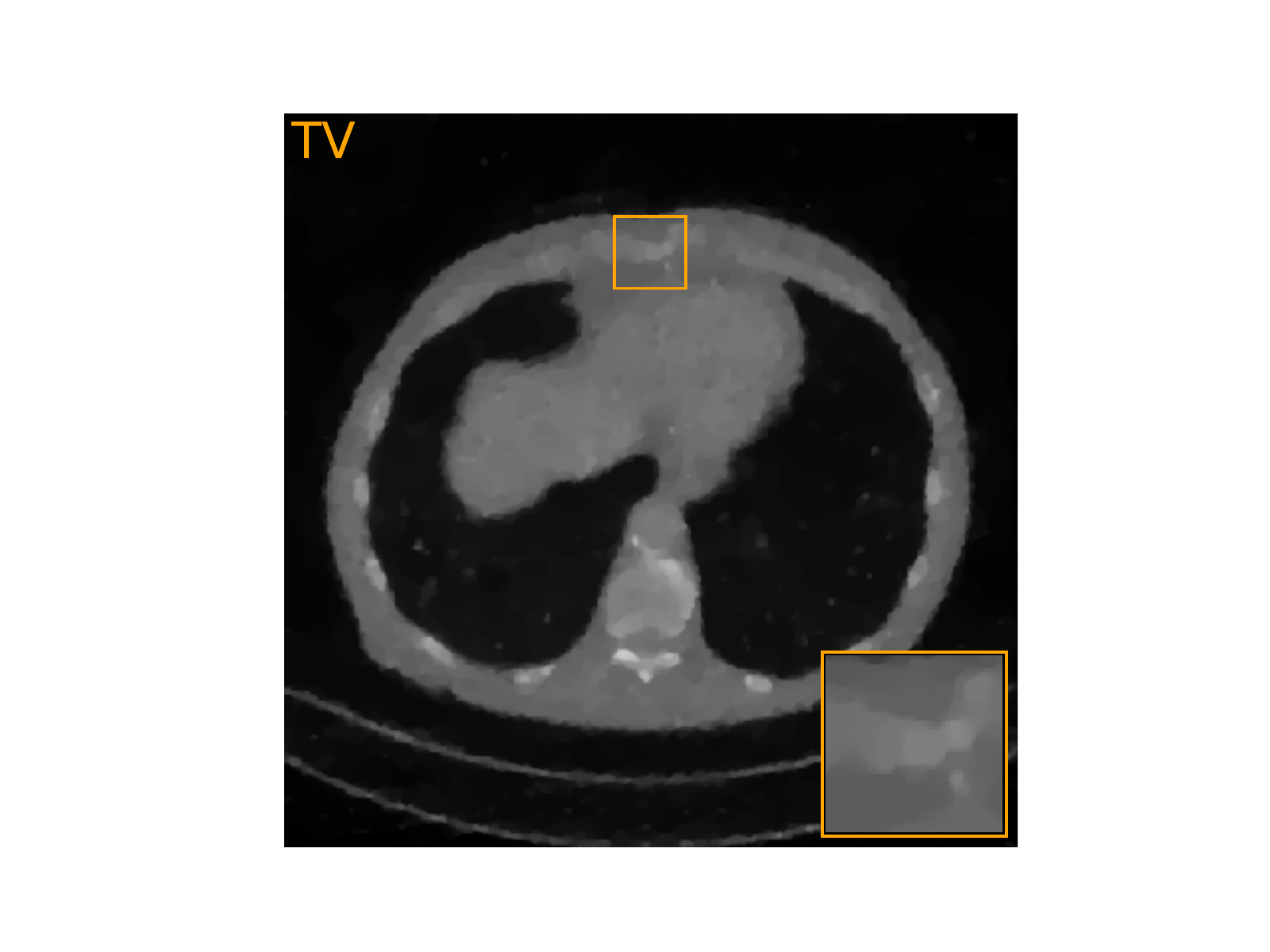}
\end{subfigure}%
\begin{subfigure}{.35\textwidth}
  \centering
  \includegraphics[scale=0.45, trim={3cm 1cm 3cm 1cm},clip]{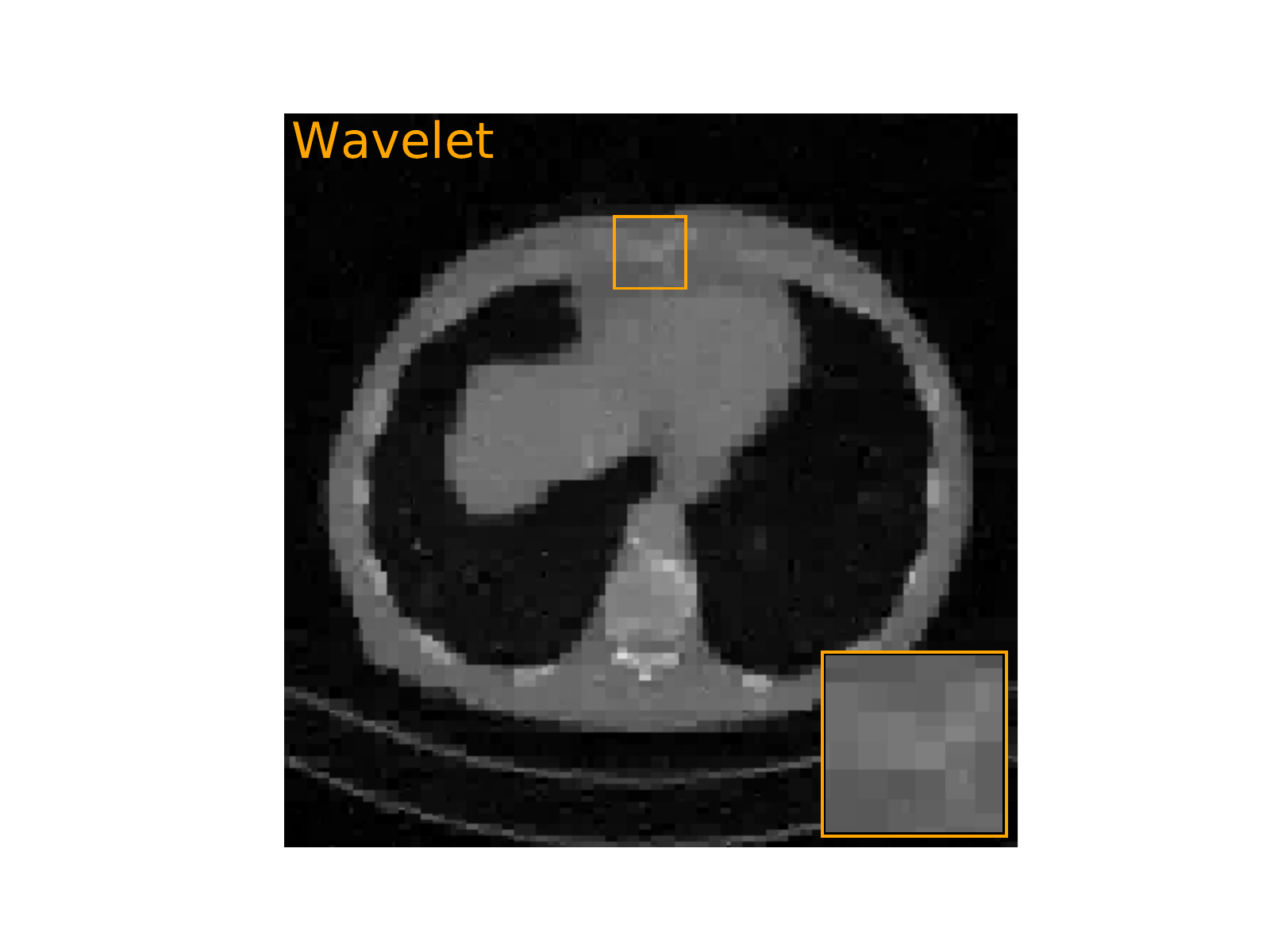}
\end{subfigure}\\

\begin{subfigure}{.35\textwidth}
  \centering
  \includegraphics[scale=0.45, trim={3cm 1cm 3cm 1cm},clip]{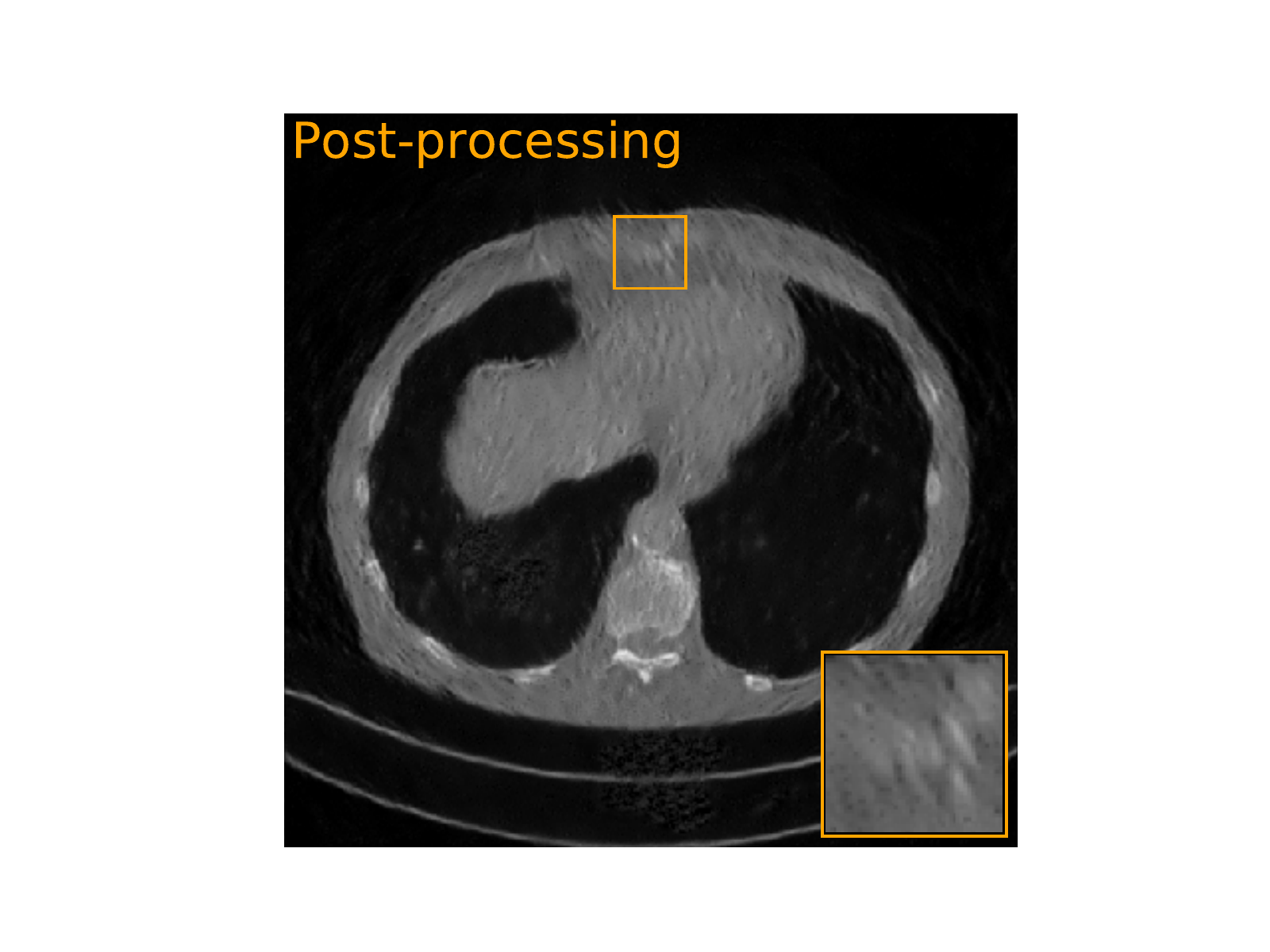}
\end{subfigure}%
\begin{subfigure}{.35\textwidth}
  \centering
  \includegraphics[scale=0.45, trim={3cm 1cm 3cm 1cm},clip]{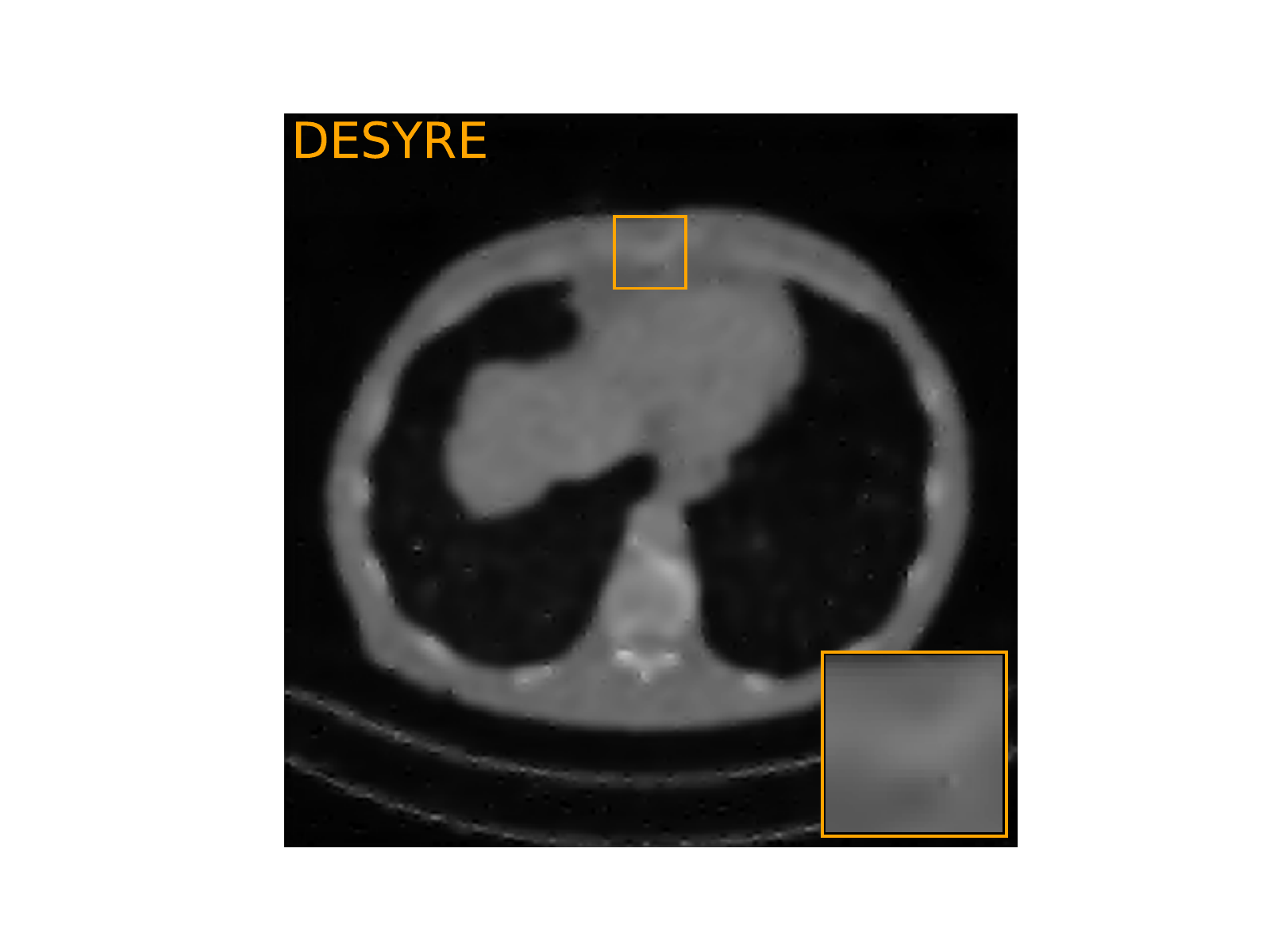}
\end{subfigure}\\
\caption{\textbf{Reconstruction results for $60$ views and $5\%$ Gaussian noise.} The subplot in the lower right corner shows a zoomed in version of the orange square.}
\label{fig:reconstruction_noisy}
\end{center}
\end{figure}

\subsection{Results}

Figure~\ref{fig:reconstruction} shows an example of the different reconstruction methods. We can see that DESYRE is outperformed by the post-processing method. We hypothesize that this is because the post-processing network uses additional information about the inverse problem in training, whereas the training of DESYRE independent of the operator.
In comparison to the other regularization methods, DESYRE shows a better performance visually and quantitatively in the case of noise-free data. When comparing DESYRE with the Wavelet synthesis regularization we see that DESYRE does not suffer from the 'pixel-like' structure even though it is also based on the Haar wavelets. Taking a look at the zoomed in version of the plot, we see that the TV-regularization somehow merges the details, whereas DESYRE is still able to represent these smaller details.  One iteration of the wavelet synthesis, TV regularization and DESYRE take  $0.057$, $0.051$ and  $0.122$ seconds, respectively

\begin{table}[htb!]
\begin{center}
\begin{tabular}{lrr}
\hline
            &  PSNR &  NMSE \\
\hline
FBP         & $28.40 \pm 0.82$  & $0.020 \pm 0.0051$ \\
Wavelet     & $31.07 \pm 0.72$  & $0.011 \pm 0.0029$ \\
TV          & $32.58 \pm 0.60$  & $0.007 \pm 0.0020$ \\
Post-processing         & $39.42 \pm 0.52$  & $0.001 \pm 0.0004$ \\
DESYRE       & $35.49 \pm 0.55$  & $0.004 \pm 0.0009$ \\
 
\hline
\end{tabular}
\end{center}
\caption{\textbf{Quantitative comparison of different reconstruction methods for noise-free data.} Average results $\pm$ standard deviation over 160 different phantoms.}
\label{tab:comparisons}
\end{table}

To quantitatively compare the reconstructions $\signal_{\al, \delta}$  we compute the peak-signal-to-noise-ration and the  normalized-mean-squared-error, respectively 
\begin{align*}
\text{PSNR}(\signal, \signal_{\al, \delta}) &\coloneqq 10 \, \log_{10}\left( \frac{\max (\signal^2)}{\norm{\signal - \signal_{\al, \delta} }^2_2} \right)
\\
\text{NMSE}(\signal, \signal_{\al, \delta}) &\coloneqq \frac{\norm{\signal - \signal_{\al, \delta} }^2_2}{\norm{\signal}_2^2},
\end{align*}
where $\signal$ is the ground truth. The results of these quantitative comparisons in the case of noise-free data can be seen in Table \ref{tab:comparisons}.

Figure~\ref{fig:reconstruction_noisy} shows an example of the reconstructions using noisy data. In this case DESYRE is able to completely remove the noise from the image. However, it cannot satisfactorily recover any small detail features which are present in the image. When compared to TV-regularization DESYRE shows a more blurry image and is outperformed by the TV-regularization. Like in the noise-free case DESYRE shows a smoother but blurrier image. This suggests that additional regularization, e.g. TV-regularization, of the image should be used to further enhance the image quality.
Even though no noise was added during training, the post-processing approach still shows reasonable results. When comparing the two deep learning approaches we see that DESYRE is able to better remove the noise from the image, while the post-processing approach is able to more clearly represent edges.


Lastly we illustrate one practical advantage of DESYRE over a standard post-processing approach. To this end, we consider a similar problem as above. However, this time we only take $n_\theta = 30$ projection views. While the underlying signal class does not change, the reconstruction of the signal is more difficult and the  post-processing network trained with $n_\theta = 60$ has been used. 
This results in reconstructions containing artefacts (Figure~\ref{fig:universality}) which cannot be removed using the post-processing network. Additionally, the post-processing network produces some structure which are  not present in the ground truth.  While the deep synthesis regularization approach was not able to completely remove the artefacts, it was able to greatly reduce them and does not introduce any additional artefacts. While we have not studied this behaviour in great detail, this suggests that the deep synthesis approach is more generally applicable without retraining the network.

\begin{figure}
\begin{center}
\begin{subfigure}{.35\textwidth}
  \centering
  \includegraphics[scale=0.45, trim={3cm 1cm 3cm 1cm},clip]{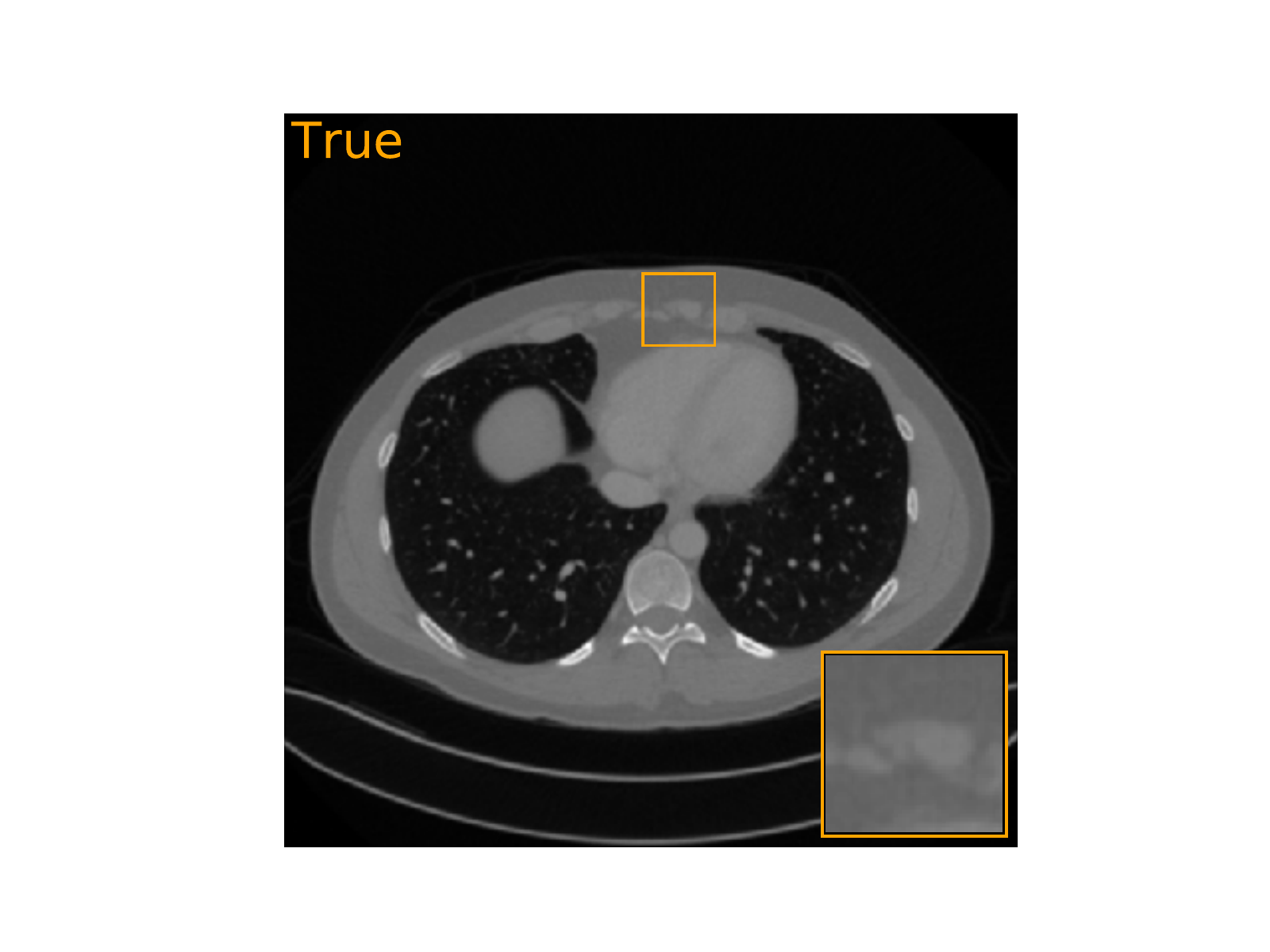}
\end{subfigure}%
\begin{subfigure}{.35\textwidth}
  \centering
  \includegraphics[scale=0.45, trim={3cm 1cm 3cm 1cm},clip]{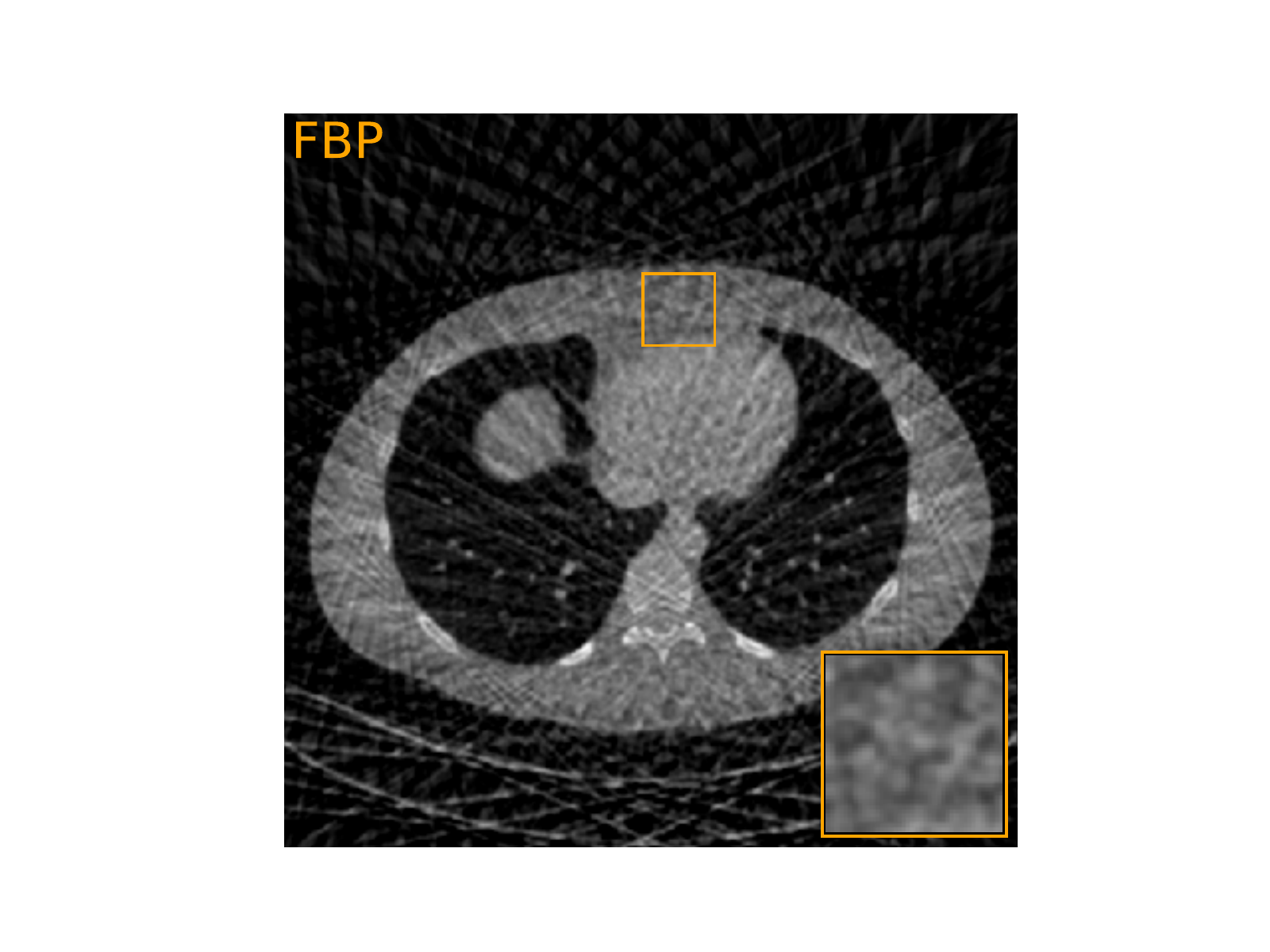}
\end{subfigure}\\

\begin{subfigure}{.35\textwidth}
  \centering
  \includegraphics[scale=0.45, trim={3cm 1cm 3cm 1cm},clip]{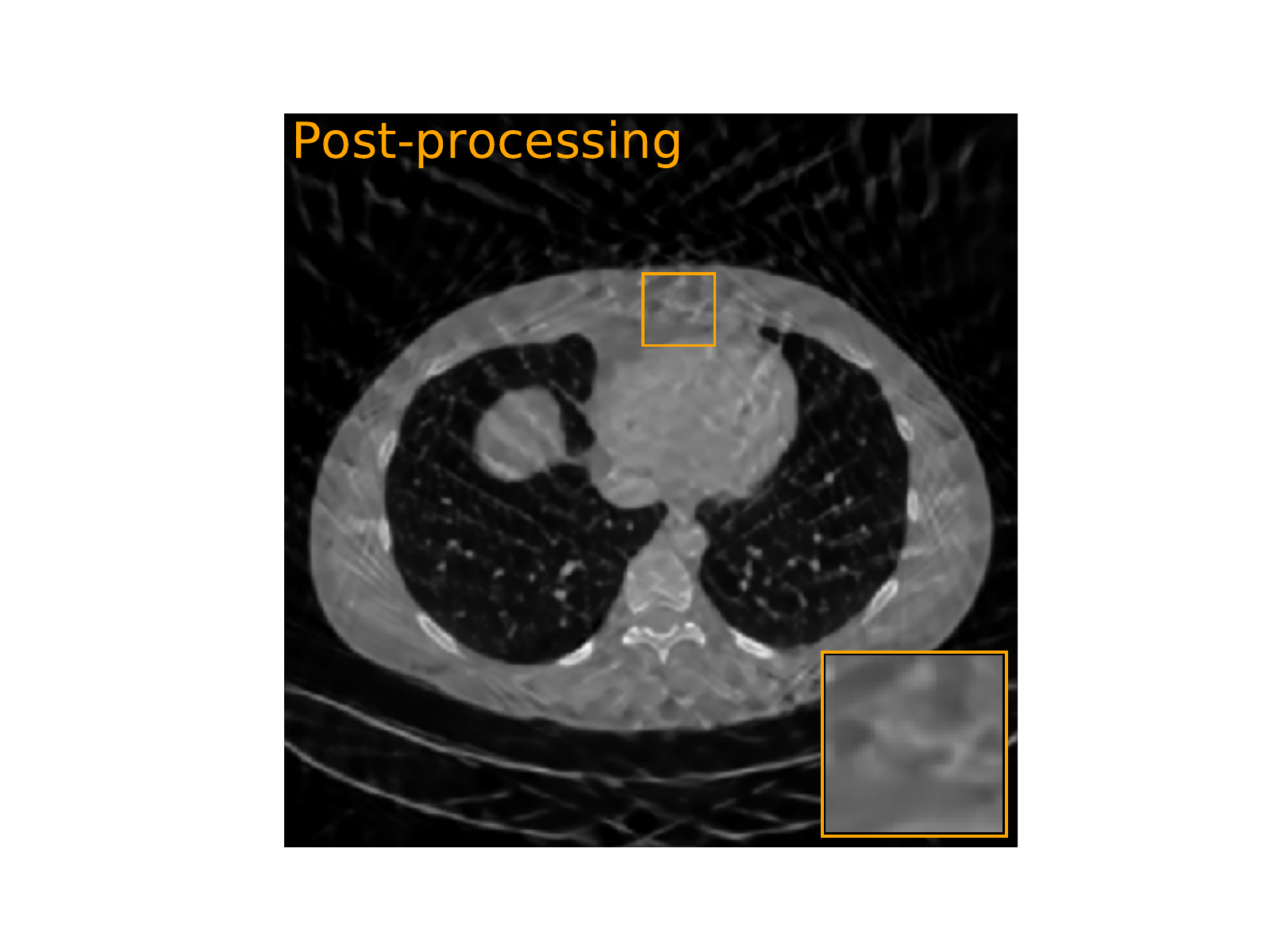}
\end{subfigure}%
\begin{subfigure}{.35\textwidth}
  \centering
  \includegraphics[scale=0.45, trim={3cm 1cm 3cm 1cm},clip]{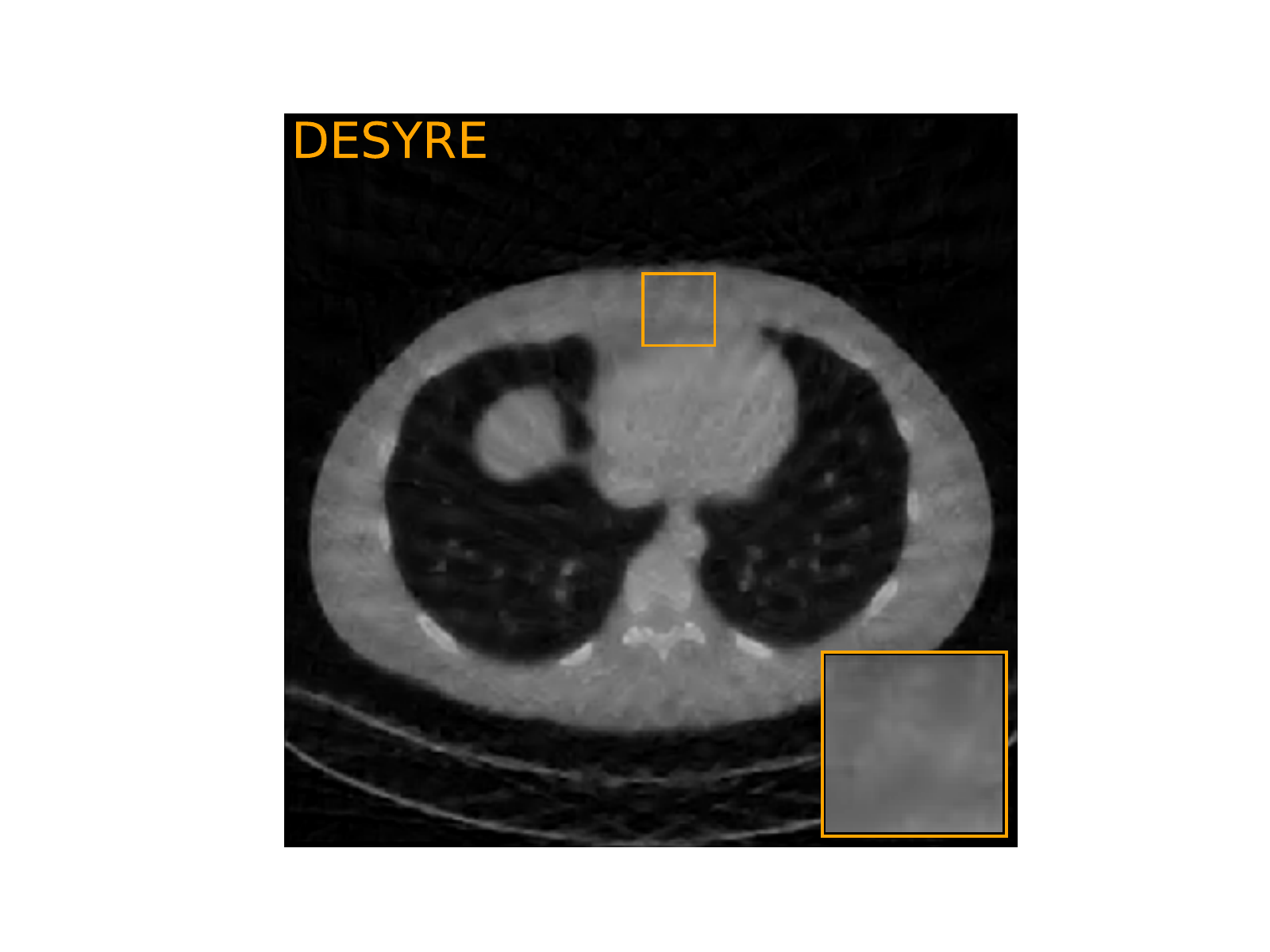}
\end{subfigure}\\
\caption{\textbf{Reconstruction results for $30$ projection views and noise-free data.} The subplot in the lower right corner shows a zoomed in version of the orange square.}
\label{fig:universality}
\end{center}
\end{figure}

\section{Summary and outlook} 
\label{sec:outlook}

We have introduced the deep synthesis   approach for solving  inverse problems. This approach relies on a neural network as a non-linear synthesis operator for representing a signal $\signal$ using the representation $\signal = \decoder (\xi)$ such that $\regg{\xi}$ is small. Using this representation we proposed to solve inverse problems by minimizing the deep synthesis functional \eqref{eq:snett},  generalizing linear frame based methods. 
In section \ref{sec:conv} we proved that the method is indeed a regularization method and we derived linear convergence rates. To find such non-linear  representations  we follow a  data-driven approach where we train a sparse encoder-decoder  pair. We give numerical results of the proposed method and compare it with  other regularization methods and a deep learning approach.

Besides the theoretical benefits, a practical advantage of the deep synthesis  approach over standard post-processing networks is that the training is  independent of the forward operator $\Ko$ and is thus more flexible in changes of the forward operator.
While the results for the sparse view CT problem shows that this approach is outperformed by  specifically trained post-processing network, the deep synthesis  approach outperforms  classical regularization methods for the considered problem.
To further improve the quality of the image reconstruction one could adapt the training strategy to include information about the inverse problem at hand. This could be achieved by, for instance, including images with artefacts in the training data and map these images to some output which does not represent the original image well, thus yielding a large data-discrepancy value in the minimization process.

\section*{Acknowledgements}

D.O. and M.H.  acknowledge support of the Austrian Science Fund (FWF), project P 30747-N32.

%
%

\bibliographystyle{IEEEtran}
\bibliography{paper}
\end{document}